\newcommand{\ds}{\displaystyle}
\newcommand{\ZZ}{\mathbf{Z}}
\newcommand{\CC}{\mathbf{C}}
\newcommand{\QQ}{\mathbf{Q}}
\newcommand{\RR}{\mathbf{R}}
\newcommand{\PP}{\mathbf{P}}
\newcommand{\JJ}{\mathcal{J}}
\newcommand{\OO}{\mathcal{O}}
\newcommand{\iddb}{i \partial \overline{\partial}}
\newcommand{\db}{\overline{\partial}}
\newcommand{\al}{\alpha}
\newcommand{\qa}{\quad}
\newcommand{\vp}{\varphi}
\newcommand{\mfa}{\mathfrak{a}}
\newcommand{\xr}{ X_{\reg}}
\newcommand{\yr}{ Y_{\reg}}
\newcommand{\Wr}{W_{\reg}}
\newcommand{\zr}{ Z_{\reg}}
\newcommand{\noi}{\noindent}
\providecommand{\wt}[1]{\widetilde{#1}}
\providecommand{\abs}[1]{\left|#1\right|}
\providecommand{\norm}[1]{\lVert#1\rVert}
\theoremstyle{plain}
\newtheorem{theorem}{Theorem}[section]
\newtheorem{prop}[theorem]{Proposition}
\newtheorem*{theorem*}{Theorem}
\newtheorem*{lemma*}{Lemma}
\newtheorem{lemma}[theorem]{Lemma}
\newtheorem{corollary}[theorem]{Corollary}
\newtheorem{proposition}[theorem]{Proposition}
\newtheorem{definition}[theorem]{Definition}
 \newtheorem{example}[theorem]{\textnormal{\textbf{Example}}}
\theoremstyle{remark}
\newtheorem{remark1}[theorem]{Remark}
\DeclareMathOperator{\divisor}{div}
\DeclareMathOperator{\reg}{reg}
\DeclareMathOperator{\Div}{div}
\DeclareMathOperator{\sing}{sing}
\DeclareMathOperator{\ord}{ord}
\begin{document}

\title[$L^2$ extension for log canonical pairs]{$L^2$ extension of holomorphic functions \\ for log canonical pairs}

\keywords{$L^2$ extension theorem; Ohsawa-Takegoshi extension; Log canonical pair; Log canonical center; Ohsawa measure}

\subjclass[2010]{} 

\dedicatory{In memory of Jean-Pierre Demailly}

\author{Dano Kim}

\date{}

\maketitle

\begin{abstract}

\noindent
In a general $L^2$ extension theorem of Demailly  for log canonical pairs, the $L^2$ criterion with respect to a measure called   the Ohsawa measure determines when a given holomorphic function can be extended. 
Despite the analytic nature of the Ohsawa measure, we establish a geometric characterization of this analytic criterion using  the theory of log canonical centers from algebraic geometry. Along the way, we characterize when the Ohsawa measure fails to have generically smooth positive density, which answers an essential question arising from Demailly's work.

\end{abstract}

\section{Introduction}

  Let $Y \subset X$ be a submanifold of a complex manifold. Let $L$ be a holomorphic line bundle on $X$ and $K_X$  the canonical line bundle of $X$.  An \emph{$L^2$ extension theorem} says:  (under suitable conditions on $X, Y, L, \ldots$)  if a certain $L^2$ norm $ \norm{s}_Y$   is finite for a holomorphic section $s$ on $Y$ of  $(K_X \otimes L)|_Y$, then there exists  $\tilde{s} \in H^0(X, K_X \otimes L)$   such that  $\tilde{s}|_Y = s \; \text{ and } \; \norm{\tilde{s}}_X \le c \norm{s}_Y $ for some constant $c>0$.

 Since \cite{OT87}, there have been extensive developments on $L^2$ extension theorems (e.g. cf. \cite{O2}, \cite{M93}, \cite{O4}, \cite{O5},  \cite{D00}, \cite{S02}, \cite{MV}, \cite{K07}, \cite{GZZ12}, \cite{B13}, \cite{GZ14}, \cite{Ca17}, \cite{CDM17}, \cite{O20} and many others).   $L^2$ extension theorems are not only analogous to vanishing and injectivity theorems in complex algebraic geometry, but also often serve as their generalizations (cf. \cite{S02}). In view of the central importance of these powerful tools, it is crucial to understand the criterion of extension given by the above  finiteness of the norm  $\norm{s}_Y$. Such finiteness is nontrivial to achieve due to singularity of the measure against which the norm is taken. The purpose of this paper is to understand such singularities.

 Let $\Psi$ be a quasi-psh function with analytic singularities (Definition~\ref{analyticsing}) on a complex manifold $X$. 
  Generalizing \cite[Thm. 4]{O5} (and other earlier results such as \cite{O2}, \cite{M93},  \cite{D00}), Demailly~\cite[Thm. 2.8, Thm. 2.12]{D15} established general $L^2$ extension theorems for the pair  $(X, \Psi)$.
 Given such $(X, \Psi)$, one can always scale the function so that the new pair $(X, c \Psi)$ for some $c >0$  is \emph{log canonical}, which is the standard generality of fundamental importance and many applications in algebraic geometry (cf. \cite{KM}) vastly generalizing the classical picture of differential forms with a pole along a  smooth hypersurface. From now on, we assume that $(X, \Psi)$ itself is a log canonical pair, as in \cite[Thm. 2.8]{D15}.   \footnote{Also the log canonical case includes the settings of \cite{M93},  \cite{D00} and \cite[Thm. 4]{O5}, cf. Remark~\ref{MD}. On the other hand, to our knowledge, in the generality of `without the log canonical condition', the $L^2$ extension result of \cite{D15} lacks a suitable quantitative $L^2$ estimates, cf. \cite[(2.15)]{D15}.  }
  
    In the $L^2$ extension theorem \cite[Thm. 2.8]{D15} for a log canonical pair,   the subvariety $Y \subset X$ is taken  to be the \emph{non-klt locus} $N(\Psi)$ of $(X,\Psi)$, i.e. the  subvariety defined by the multiplier ideal of $\Psi$.    
 Also the input norm $\norm{s}_Y $ in \cite[Thm. 4]{O5} is taken as the $L^2$ norm with respect to a measure called the Ohsawa measure ${dV[\Psi]}$, which is defined in terms of a certain limit which is taken over some tubular neighborhoods shrinking to the pole set of $\Psi$.

\begin{definition}\cite{O5}, \cite{D15} \label{Omeasure}
   Let $(X, \Psi)$ and $Y$ be as above. 
Let $dV_X$ be a smooth volume form on $X$. Let $\yr$ be the regular locus of $Y$, i.e. the set of nonsingular points. A positive measure $d\mu$ on $\yr$ is called  the  \textbf{\emph{Ohsawa measure}} $dV[\Psi]$ of $\Psi$ on $Y$ (with respect to $dV_X$) if it satisfies the following condition:  for every $g$, a real-valued compactly supported continuous function on $\yr$ and for every $\tilde{g}$, a compactly supported extension of $g$ to $X$, we have the relation 
 
 \begin{equation}\label{dvp}
 \int_{\yr} g \; d\mu = \lim_{t \to -\infty} \int_{\{x \in X, t < \Psi(x) < t+1 \}} \tilde{g} e^{-\Psi} dV_{X}. 
\end{equation}

\end{definition}

   In some sense, this analytic definition is naturally dictated by the framework of the $L^2$ estimates for $\db$ operator (cf. \cite{OT87}, \cite{O5}, \cite{D15}) so that the $\db$ equation for the extension problem can be solved for a given section (cf. around (5.17) of \cite{D15}). 
   
   The Ohsawa measure first appeared in \cite{O4} and later used in the important work \cite{O5} when $\Psi$ is locally of the form $\Psi (z) = k \log (\abs{z_1}^2 + \ldots + \abs{z_k}^2) + O(1) $ near $Y$ where $(z_1 = \ldots = z_k = 0)$ are local coordinate defining equations for a submanifold $Y$ of codimension $k$. 
   The Ohsawa measure plays important roles in various recent work related to $L^2$ extension theorems, cf. \cite{C21}, \cite{De21}, \cite{GZ14}, \cite{K20}, \cite{KS20} and others. 
       Still, some essential understanding  has been missing. The following question arose from communications with Chen-Yu Chi~\cite{C20}. 
   \\

\noi \textbf{Question.} 
 \emph{ Let $(X, \Psi)$ be a log canonical pair. Let $Z$ be an irreducible component of the non-klt locus $Y$ of $(X, \Psi)$.   When does the Ohsawa measure $dV[\Psi]$ have smooth positive density on a nonempty Zariski open subset of $Z$? } 

\; 
 
 Here the density refers to the density function with respect to the Lebesgue measure in local coordinates. 
In fact, the assertion of the question was claimed in  \cite[Prop. 4.5 (a)]{D15} \footnote{Comparing with the Question, although `nonempty' and `each irreducible component of $Z_p$' are not spelled out in the statement of \cite[Prop. 4.5 (a)]{D15}, they are clearly indicated by its proof and also by conversations we had with the author of \cite{D15}.},        which would mean that the Ohsawa measure always has singularities only along a proper Zariski closed subset of $Z$.

 However, this claim was made under the implicit assumption that each irreducible component of the non-klt locus $Y$ is dominated by precisely one divisor $E \subset X'$ with \emph{discrepancy}  $a(E, X, \Psi)$  equal to $-1$, in a log resolution $f : X' \to X$ of the log canonical pair $(X, \Psi)$.

  We will call such a divisor $E$ on $X'$ as a  \emph{log canonical place} for $(X, \Psi)$ and its image $f(E)$ on $X$  a \emph{log canonical center} for $(X, \Psi)$. Log canonical places are exactly those divisors on $X'$ that contribute to the nontriviality of the multiplier ideal of $\Psi$.   See \S 2.1, 2.2 for more details on these notions. 
  Also, a log canonical center is said to be \emph{maximal} if it is not properly contained in another log canonical center. Maximal log canonical centers are nothing but the irreducible components of the non-klt locus of $(X, \Psi)$.

 A crucial subtlety with log canonical pairs   is that,  given a log canonical center $Z$,  there may exist two or more log canonical places for $Z$  on a log resolution $f: X' \to X$.     Suppose that there exist at least two  log canonical places $E_1, \ldots, E_m$ for a maximal log canonical center $Z$. Chen-Yu Chi~\cite{C20} observed that, if  some $E_i$ and $E_j$ ($i\neq j$) happen to have the image $f(E_i \cap E_j)$ dominate $Z$, then the Ohsawa measure cannot have smooth density on a nonempty Zariski open subset of $Z$ (cf. the proof of Theorem~\ref{infinite}). 
 
 Indeed, for example, suppose that $Z$ is a point having two lc places $E_1, E_2$ in a log resolution (as in Example~\ref{Fujino}).  In this special case, a priori, the Ohsawa measure $dV[\Psi]$ (restricted to $Z$) is represented by a single nonnegative real number including infinity. Then, as in Lemma~\ref{demu}, the number $dV[\Psi]$ is equal to the sum of the integral of a measure with poles along $(E_1 + F)|_{E_2}$ over $E_2$ and the integral of a measure with poles along $(E_2 + F)|_{E_1}$ over $E_1$ (for an appropriate divisor $F$), both of which are infinities.

  In order to resolve this issue, we answer the above Question by showing that whenever $Z$ has non-unique log canonical places, the Ohsawa measure has singularities along the entire $Z$. 
  
\begin{theorem}\label{infinite}
 Let $(X, \Psi)$ be a log canonical pair where $X$ is a complex manifold and $\Psi$ a quasi-psh function on $X$ with analytic singularities. 
  Let $Z$ be a maximal log canonical center of the pair $(X, \Psi)$. 
 If $Z$ has at least two log canonical places in a log resolution, then the Ohsawa measure $dV[\Psi]_Z$ is the infinity measure, i.e. $\ds \int_{\zr} g \; dV[\Psi]_Z = \infty$  for every continuous function $g : \zr \to \RR$ unless $g \equiv 0$. 

\end{theorem}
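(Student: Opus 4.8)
The plan is to pull everything back to a log resolution $f \colon X' \to X$ and to show that the defining limit in \eqref{dvp} already diverges to $+\infty$ from an arbitrarily small neighborhood of the generic point of $Z$. I would split the argument into an algebraic input and an analytic one. The algebraic input is that the hypothesis ``$Z$ has at least two log canonical places'' forces two such places $E_i, E_j$ that actually meet over $Z$, in the strong sense that $f(E_i \cap E_j) = Z$. The analytic input is the divergence observed by Chi: once two log canonical places cross over the generic point of $Z$, the contribution of a neighborhood of $E_i \cap E_j$ to the right-hand side of \eqref{dvp} grows without bound as $t \to -\infty$. Combining these with the residue description of $dV[\Psi]_Z$ in Lemma~\ref{demu} yields the theorem.

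For the algebraic step, I would first note that each log canonical place of $Z$ dominates $Z$ by definition, so over a general point $z \in Z$ all of the places $E_1, \dots, E_m$ meet the fibre $f^{-1}(z)$; moreover, since $Z$ is a maximal log canonical center, these are the only log canonical places appearing over such $z$. The crucial point is that the reduced sum $\sum_k E_k$ of log canonical places is connected in a neighborhood of each fibre of $f$. This I would obtain from the Kollár--Shokurov connectedness principle applied to the boundary $B = -\sum_k a(E_k, X, \Psi) E_k$ on $X'$, whose coefficient is $1$ exactly along the log canonical places and for which $-(K_{X'} + B)$ is $f$-numerically trivial, hence $f$-nef and $f$-big. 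Connectedness of $\bigcup_k \bigl(E_k \cap f^{-1}(z)\bigr)$ together with $m \ge 2$ forces two of the places to intersect over $z$. As there are finitely many pairs and this holds for every general $z$, some fixed pair $E_i, E_j$ satisfies $E_i \cap E_j \cap f^{-1}(z) \neq \emptyset$ for a dense set of $z$; since $f$ is proper and $Z$ is irreducible, $f(E_i \cap E_j) = Z$. I expect this step to be the main obstacle, as it is precisely the phenomenon that fails for a log smooth configuration and is responsible for the implicit hypothesis in \cite[Prop.~4.5(a)]{D15}.

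For the analytic step, choose a general point of $E_i \cap E_j$ lying over a point $z_0 \in Z$ with $g(z_0) \neq 0$, and pick simple normal crossing coordinates $(w_1, \dots, w_n)$ on $X'$ with $E_i = \{w_1 = 0\}$ and $E_j = \{w_2 = 0\}$, so that $f^* \Psi = \lambda_1 \log|w_1|^2 + \lambda_2 \log|w_2|^2 + \phi$ with $\phi$ bounded, and $f^* dV_X = |w_1|^{2 b_1} |w_2|^{2 b_2} \rho \, dV_{X'}$ with $\rho > 0$ smooth. Since $E_i, E_j$ are log canonical places we have $\lambda_1 = b_1 + 1$ and $\lambda_2 = b_2 + 1$, so $f^*(e^{-\Psi} dV_X)$ carries the factor $|w_1|^{-2} |w_2|^{-2}$. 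Passing to the logarithmic variables $x = \log|w_1|^2$, $y = \log|w_2|^2$ turns this factor, against $dV_{X'}$, into the flat measure $\tfrac14\, dx\, dy\, d\theta_1\, d\theta_2$ in the $(w_1, w_2)$-directions, while $\{t < f^* \Psi < t+1\}$ becomes the strip $\{t < \lambda_1 x + \lambda_2 y + \phi < t+1\}$. Intersected with $\{x, y < -\delta\}$, this strip has area growing like a positive multiple of $|t|$ as $t \to -\infty$, because its width is bounded below while the length of its segment in the third quadrant grows linearly in $|t|$. Since $f^*\tilde g\, e^{-\phi} \rho$ is bounded below by a positive constant near the chosen point, the corresponding piece of the right-hand side of \eqref{dvp} is $\gtrsim c\,|t| \to +\infty$. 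Equivalently, in the language of Lemma~\ref{demu}, the residue measure on $E_j$ has a pole of the shape $|w_1|^{-2}$ along $E_i \cap E_j$, whose transverse integral $\int r^{-1}\,dr$ diverges.

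Finally, because $f(E_i \cap E_j) = Z$, the divergence above occurs over a dense subset of $Z$, so the limit in \eqref{dvp} is $+\infty$ for every $\tilde g$ extending a nonnegative continuous $g \not\equiv 0$ on $\zr$; that is, $dV[\Psi]_Z$ assigns infinite mass to every nonempty open subset of $\zr$ and hence is the infinity measure. For a sign-changing $g$ one runs the same estimate on an open set where $g$ has constant sign, so that no finite value can result unless $g \equiv 0$.
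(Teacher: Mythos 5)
Your proposal is correct and follows essentially the same route as the paper: your algebraic step is exactly the paper's use of the connectedness theorem (Proposition~\ref{basic0}, i.e.\ Koll\'ar--Shokurov connectedness for the crepant boundary on $X'$) to produce two lc places with $f(E_i \cap E_j) = Z$, and your analytic step is Chi's divergence observation, which you verify by writing out in logarithmic coordinates the coefficient-$1$ pole structure that the paper invokes through Lemma~\ref{demu} and \eqref{pole}. The only cosmetic differences are that you phrase the connectedness argument positively (over a general fiber) rather than by contradiction, and that you inline the local computation rather than citing the structure of the measure $d\nu$ on the resolution.
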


 Here  $dV[\Psi]_Z$ is the restriction of the Ohsawa measure to $Z$, see Definition~\ref{iber}. Theorem~\ref{infinite}  says that, in concrete applications of $L^2$ extension where one is given $Z$ and needs to find $\Psi$ so that $Z$ is a maximal log canonical center of the pair $(X, \Psi)$, one would prefer to avoid such $\Psi$ as in the above theorem. Sometimes one can use \emph{tie-breaking}
which would enable one to modify $\Psi$ in order to avoid the situation of non-unique lc places, cf. \cite{Ko97}, \cite{Ka97}, \cite[\S 8.1]{Ko07}. However this will require some strict positivity, which is 
  not available under the general curvature condition of Theorem~\ref{extension}.  This makes it even more  important to recognize the kind of limitation for $L^2$ extension as described in Theorem~\ref{infinite}.

 Now we turn to our next main result. As mentioned earlier,  the $L^2$ norm condition with respect to the Ohsawa measure gives the analytic criterion for extension in Demailly's theorem~\cite{D15}. 
   However the condition will be difficult to check using the definition of the Ohsawa measure directly, as long as the singularity of the Ohsawa measure remains  mysterious. 
  We  establish the following  geometric characterization of the $L^2$ norm condition in terms of log canonical centers. 
 
\begin{theorem}\label{main1} 
Let $(X, \Psi)$ be a log canonical pair where $X$ is a complex manifold and $\Psi$ a quasi-psh function on $X$ with analytic singularities (Definition~\ref{analyticsing}). Let $Y \subset X$ be the non-klt locus of $(X, \Psi)$. 
Let $s$ be a holomorphic function on $Y$.  
Then the following conditions (1) and (2) are equivalent:

(1) The function  $s$ is locally $L^2$ at every point of $Y$ with respect to the Ohsawa measure ${dV[\Psi]}$.  

(2) The function $s$ vanishes along 
 \begin{itemize}
 \item
all non-maximal log canonical centers of $(X, \Psi)$ and
\item   all maximal log canonical centers of $(X, \Psi)$ with non-unique log canonical places. 
\end{itemize}
\end{theorem}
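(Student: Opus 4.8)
The plan is to reduce the global $L^2$ condition to a local analysis along each log canonical center, stratifying $Y$ by its lc centers and treating the generic point of each stratum in turn. The two bullet points in condition (2) correspond to the two distinct mechanisms by which the Ohsawa measure develops singularities, so I would handle each separately and only afterwards assemble them into the equivalence.

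First I would dispose of the maximal lc centers with non-unique lc places. If $Z$ is such a center, Theorem~\ref{infinite} says that $dV[\Psi]_Z$ is the infinity measure, so $\int_{Z_{\reg}} \abs{s}^2 \, dV[\Psi]_Z$ is finite only when $s|_Z \equiv 0$. This immediately shows that the second bullet of (2) is necessary for (1), and conversely that requiring $s$ to vanish along such $Z$ is exactly what is forced there.

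Next, for a maximal lc center $Z$ admitting a \emph{unique} lc place $E$ on a log resolution $f : X' \to X$, the main work is a local computation of the density of $dV[\Psi]_Z$ near a generic point of $Z$. Using the fiber-integration description of the Ohsawa measure from Lemma~\ref{demu}, the density at a point $y \in Z$ is realized as an integral over the fiber $(f|_E)^{-1}(y)$ of a measure with poles along the restrictions to $E$ of the other divisors $E_j$ (together with an auxiliary divisor $F$) meeting $E$ in the resolution. Away from the images $f(E \cap E_j)$, among which one finds the non-maximal lc centers contained in $Z$, no additional pole appears in the fiber, and one recovers that the density is smooth and strictly positive on a Zariski open subset of $Z$; there the local $L^2$ condition is automatic for continuous $s$. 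The crux is the behaviour as $y$ approaches the generic point of a non-maximal lc center $W \subset Z$: the poles coming from those $E_j$ whose intersection with $E$ dominates $W$ force the fiber integral, hence the density, to diverge. A normal-form computation in coordinates transverse to $W$ should show that this divergence is of the order of the inverse square of the distance to $W$, so that $\int \abs{s}^2 \, dV[\Psi]_Z$ over a neighborhood of $W$ in $Z$ is finite if and only if $s$ vanishes (to first order) along $W$. This matches the phrase ``vanishes along $W$'' in condition (2) and yields both directions of the equivalence at $W$ at once.

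Finally I would assemble the local statements into the global equivalence, the delicate point being a point $y_0$ lying in the intersection of several lc centers, where the singularities of the density produced by the various $W$ superpose and where $y_0$ itself may lie in $Y \setminus Y_{\reg}$. I expect an induction on the dimension of the lc centers, equivalently on the stratification of $Y$ by lc centers, to reduce matters to the single-stratum analysis above, with first-order vanishing along each relevant center sufficing to control the combined singularity. I anticipate that this quantitative divergence analysis near non-maximal centers, pinning down the exact rate and confirming that reduced vanishing is simultaneously necessary and sufficient, will be the main obstacle, since it requires controlling the fiber integral uniformly as the base point degenerates toward the lower strata.
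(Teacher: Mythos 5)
Your reduction to irreducible components and your use of Theorem~\ref{infinite} for maximal centers with non-unique lc places match the paper, and your sufficiency direction (vanishing along the properly contained centers cancels the coefficient-$1$ poles of the pulled-back measure) is essentially the paper's argument via the pullback identity \eqref{s1}. But your necessity direction has a genuine gap: you assert that near a non-maximal lc center $W \subset Z$ the divergence of the density is forced by ``the poles coming from those $E_j$ whose intersection with $E$ dominates $W$,'' without ever establishing that such an $E_j$ exists. That existence is precisely the crux. A priori, every lc place $E^i_W$ of $W$ could meet the unique lc place $E$ of $Z$ in a set whose image is a \emph{proper} closed subset of $W$ (or not meet $E$ at all); in that case the pole divisor $F|_E$ of $d\nu$ would lie over a proper subset of $W$, the density of $dV[\Psi]_Z$ would remain locally integrable at a generic point of $W$, and the $L^2$ condition would not force $s$ to vanish along all of $W$ --- your computation would simply never produce the required divergence. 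The paper closes this hole with Lemma~\ref{one}: there exists $i$ with $f(E^i_W \cap E) = W$. Its proof is not a local normal-form computation but an application of the connectedness theorem for lc places (Proposition~\ref{basic0}, going back to Kawamata, Ambro and Fujino and resting on relative vanishing theorems). Nothing in your outline supplies this global geometric input.

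A secondary point: you locate the main difficulty in the quantitative divergence analysis near lower strata and propose an induction over the stratification of $Y$ by lc centers. In the paper this is unnecessary: all estimates are performed upstairs on the log resolution, where $d\nu$ has poles along an snc divisor with coefficients at most $1$, so local $L^2$-ness of $f^*s$ against $d\nu$ is read off directly --- vanishing of $f^*s$ along each coefficient-$1$ component is necessary and sufficient, even at points where several components meet --- and the only delicate step is transporting this information back down to $Y$, which is exactly where Lemma~\ref{one} enters. So the superposition and induction machinery you anticipate can be dispensed with, while the connectedness input cannot.
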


Note that the Ohsawa measure is defined on the regular locus $\yr$ of $Y$ and all the integrals are 
taken on $\yr$ while  we take every point of $Y$ in the condition (1).  

Theorem~\ref{main1} in particular says that, when $X$ is compact, the  geometric condition in the above (2) alone determines whether a given section on $Y$ can be extended using Demailly's theorem \cite[Thm. 2.8]{D15} (under its curvature conditions).

Recall that the definition of the Ohsawa measure is essentially dictated by the framework of the $L^2$ estimates for $\db$ operator : a priori, there is no immediate reason to foresee that the theory of log canonical centers (cf. \cite{Ka97}) from algebraic geometry will be useful to understand the Ohsawa measure as in Theorem~\ref{main1}.

 The methods of proofs of Theorems~\ref{infinite} and \ref{main1} consist of combining the analytic theory of the Ohsawa measure (cf. \cite{D15}, \cite{O5}) and the algebro-geometric theory of log canonical centers (cf. \cite{Ka97}, \cite{A03}, \cite{Ko07}, \cite{F11}). The starting point of this combination is Demailly's idea of using log resolutions of pairs $(X, \Psi)$ to understand the Ohsawa measure, which can be viewed as the direct image of some other measures on the log resolution (Lemma~\ref{demu}). We then analyze the geometry of log canonical places where a crucial role  is played by a connectedness result,  Proposition~\ref{basic0}.   
 In view of these methods (and also the roles played in formulating the right statements),  our main results Theorem~\ref{infinite} and Theorem~\ref{main1} can be regarded as interesting applications of algebraic geometry to complex analysis.

As a consequence of Theorem~\ref{main1}, we have the following characterization of the local integrability of the Ohsawa measure. 

\begin{corollary}[=Corollary~\ref{coro2} (2)]\label{Coro1.3}
 Let $(X, \Psi)$ be a log canonical pair as in Theorem~\ref{main1}. The Ohsawa measure ${dV[\Psi]}$ is locally integrable if and only  if each connected component $Z$ of the non-klt locus $Y$ is irreducible and $Z$ is a   minimal  log canonical center of $(X, \Psi)$ with a unique log canonical place.

\end{corollary}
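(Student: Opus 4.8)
The plan is to obtain this corollary as the special case $s \equiv 1$ of Theorem~\ref{main1}. First I would observe that, directly from Definition~\ref{Omeasure}, the Ohsawa measure $dV[\Psi]$ is locally integrable precisely when $\int_{U \cap \yr} dV[\Psi] < \infty$ for a neighborhood $U$ of each point of $Y$; since $\abs{1}^2 \equiv 1$, this is exactly the condition that the constant function $s \equiv 1$ on $Y$ be locally $L^2$ at every point of $Y$ with respect to $dV[\Psi]$. Thus condition (1) of Theorem~\ref{main1} applied to $s \equiv 1$ holds if and only if $dV[\Psi]$ is locally integrable.

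Next I would feed $s \equiv 1$ into the equivalence (1)$\Leftrightarrow$(2) of Theorem~\ref{main1}. Because the constant function $1$ vanishes along a subvariety $V$ only when $V = \emptyset$, condition (2) for $s \equiv 1$ collapses to the two assertions that $(X,\Psi)$ has \emph{no} non-maximal log canonical centers whatsoever, and that \emph{every} maximal log canonical center has a unique log canonical place. So local integrability of $dV[\Psi]$ is equivalent to this pair of purely geometric conditions, and it remains only to reconcile them with the formulation of the corollary in terms of connected components of $Y$.

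The heart of the matter is therefore the algebro-geometric translation, and the key input is that any point in the intersection of two distinct log canonical centers lies on a log canonical center properly contained in each. I would extract this from the structure theory of the poset of log canonical centers, underwritten by the connectedness result Proposition~\ref{basic0}: through each point of $Y$ there passes a unique minimal log canonical center, which is contained in every log canonical center through that point. Granting this, the absence of non-maximal log canonical centers forces the maximal ones (i.e.\ the irreducible components of $Y$) to be pairwise disjoint — if two of them met, the minimal center through an intersection point would be a log canonical center properly contained in each, hence non-maximal, a contradiction. Consequently every connected component of $Y$ is a single irreducible component, hence irreducible, and since no log canonical center is then properly contained in another, each such component is simultaneously a maximal and a minimal log canonical center. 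Conversely, if every connected component $Z$ is irreducible and a minimal log canonical center, the irreducible components of $Y$ are disjoint and, by minimality, none can properly contain a log canonical center, so no non-maximal log canonical center exists; the unique-lc-place hypotheses on the two sides then match verbatim. Combining this equivalence with the first two paragraphs yields the corollary.

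I expect the main obstacle to be exactly this last translation — the clean passage between ``no non-maximal log canonical centers'' and ``each connected component is an irreducible minimal log canonical center'' — since it is where the intersection-theoretic behavior of log canonical centers, and hence Proposition~\ref{basic0}, is genuinely needed; all of the analytic content has already been absorbed into Theorem~\ref{main1}.
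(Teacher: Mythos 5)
Your proposal is correct and follows essentially the same route as the paper: the paper's proof of Corollary~\ref{coro2}~(2) likewise reduces to feeding $s\equiv 1$ into Theorem~\ref{main1} (after noting one may assume $Y$ connected) and then translating the resulting geometric conditions via the structure of log canonical centers (Proposition~\ref{basic}, itself resting on Proposition~\ref{basic0}). The only difference is one of exposition: the paper dispatches the poset-theoretic translation in a sentence as ``clear,'' whereas you spell it out, including the disjointness of the maximal centers and the converse direction.
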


This means that $L^2$ extension theorems can indeed extend as many sections as would be expected from algebraic geometry (cf. \cite{KM}, \cite{Ka97}), since Corollary~\ref{Coro1.3} says that $dV[\Psi]$ presents no local obstruction for extension when $Z$ is a minimal log canonical center. 

As another consequence of Theorem~\ref{main1}, we  derive a version of $L^2$ extension theorem, namely Theorem~\ref{extension} (see the statement in Section 3).  It extends a section given on one irreducible component (i.e. a maximal log canonical center) of the non-klt locus, rather than given on the entire non-klt locus. This  provides significant flexibility in $L^2$ extension, as would be often needed  in applications.  \footnote{The author has a previous result of $L^2$ extension theorem \cite[Thm. 4.2]{K07} in such a setting, which is independent of the approach using the Ohsawa measure. More comparison between these results will be treated elsewhere. }

For Theorem~\ref{extension}, the following yields a geometric characterization of the input norm condition. 

\begin{corollary}\label{coro3}

Let $(X, \Psi)$ be a log canonical pair as in Theorem~\ref{main1}.  Let $Y$ be a maximal lc center of $(X, \Psi)$ with a unique lc place.  Let $s$ be a holomorphic function on $Y$. 
Then the following conditions (1) and (2) are equivalent: 

(1) The function  $s$ is locally $L^2$ at every point of $Y$ with respect to the Ohsawa measure ${dV[\Psi]}_Y$.  

(2) The function $s$ vanishes along all log canonical centers of $(X, \Psi)$ that are properly contained in $Y$. 

\end{corollary}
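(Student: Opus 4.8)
The plan is to obtain this as the single-component specialization of Theorem~\ref{main1}. Since the proof of Theorem~\ref{main1} is local along each maximal log canonical center (via the direct-image description of the Ohsawa measure in Lemma~\ref{demu}), I would isolate the contribution of the one component $Y$ and track how the restricted measure $dV[\Psi]_Y$ of Definition~\ref{iber} detects only the log canonical centers contained in $Y$. As a first step I would translate condition (2): because $Y$ is \emph{maximal}, every log canonical center properly contained in $Y$ is non-maximal, and conversely, by the standard fact that an intersection of log canonical centers is again a union of log canonical centers, every locus where $Y$ meets another maximal center is a log canonical center properly contained in $Y$. Thus (2) is exactly the restriction to $Y$ of the first bullet of Theorem~\ref{main1}(2); and since $Y$ has a \emph{unique} log canonical place, $Y$ never enters the second bullet of that theorem. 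Hence, read on the single component $Y$, the criterion of Theorem~\ref{main1} is precisely ``$s$ vanishes along all log canonical centers properly contained in $Y$.''

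Next I would use Lemma~\ref{demu} to write $dV[\Psi]_Y = f_*\nu$, where $E$ is the unique log canonical place over $Y$, $f\colon E \to Y$ is the induced (generically one-to-one) map, and $\nu$ is a measure on $E$ whose singularities lie along the intersections of $E$ with the remaining boundary divisors of the log resolution. This exhibits $dV[\Psi]_Y$ as an object depending only on $Y$ and its internal log canonical substructure, independently of the other maximal centers of $(X,\Psi)$. The $f$-images of the order-two poles of $\nu$, namely those coming from the other log canonical places meeting $E$, are exactly the codimension-one (in $Y$) log canonical centers properly contained in $Y$, while the non-log-canonical-place part of the boundary contributes only locally integrable singularities. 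Invoking \cite[Prop. 4.5 (a)]{D15} under its implicit unique-place hypothesis, $dV[\Psi]_Y$ therefore has smooth positive density on the complement in $\yr$ of the proper log canonical centers, so there $s$ is automatically locally $L^2$ and no condition is imposed.

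It then remains to analyze local square-integrability of $s$ against $dV[\Psi]_Y$ near a proper log canonical center $W \subsetneq Y$. Pulling $s$ back along $f$ and using the simple-normal-crossing model from the resolution, near such a point the density of $\nu$ is comparable to $\prod_j |t_j|^{-2}$, where the $t_j=0$ cut out the log canonical places meeting $E$ whose images are the codimension-one proper centers through $W$; here Proposition~\ref{basic0} is the tool that controls precisely which places meet $E$. A direct integration then shows that $\int |s|^2\, dV[\Psi]_Y$ is finite near $W$ if and only if $f^*s$ is divisible by $\prod_j t_j$, that is, if and only if $s$ vanishes along each of these codimension-one centers; and vanishing along all codimension-one proper centers of $Y$ is equivalent to vanishing along all proper centers, since the deeper strata are their intersections. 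This gives both implications simultaneously: under (2) the poles are exactly cancelled, yielding (1), whereas if $s$ fails to vanish along some proper center the corresponding pole is uncancelled and forces divergence, contradicting (1).

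The main obstacle will be the reconciliation in the last paragraph of the intrinsically defined measure $dV[\Psi]_Y$ with the global Ohsawa measure $dV[\Psi]$ of Theorem~\ref{main1} at the points of $\yr$ that are \emph{singular} points of the full non-klt locus, where $Y$ meets another maximal center and where $dV[\Psi]$ itself is not even defined. The key point is that these loci are proper log canonical centers of $Y$, hence are governed by the same pole-cancellation mechanism as the internal substructure of $Y$; what must be checked is that passing to the restricted measure via Definition~\ref{iber} does not change the orders of the poles, which again follows from the direct-image description of Lemma~\ref{demu} together with the uniqueness of the log canonical place over $Y$. Once this is verified, the equivalence (1) $\Leftrightarrow$ (2) is exactly Theorem~\ref{main1} read on the single component $Y$.
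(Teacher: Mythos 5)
There is a genuine error in your third paragraph, and it breaks the direction (1) $\Rightarrow$ (2). You identify the coefficient-one poles of $\nu$ on the unique lc place $E$ (the divisor $F|_E$ of \eqref{pole}) with lc centers that are of \emph{codimension one in $Y$}, and you then claim that vanishing along all codimension-one proper centers is equivalent to vanishing along all proper centers ``since the deeper strata are their intersections.'' Both claims are false. The image $f(Q)$ of a coefficient-one component $Q$ of $F|_E$ is an lc center properly contained in $Y$, but it can have \emph{any} codimension in $Y$; and proper lc centers of $Y$ need not be intersections of divisorial (in $Y$) ones. For instance, take $\Psi = \log\abs{z}^2 + 2\log(\abs{x}^2+\abs{y}^2+\abs{z}^2)$ on $\CC^3$: the maximal lc center is the plane $Y=(z=0)$ with unique lc place its strict transform $E$ under the blow-up of the origin, the only proper lc center is the point $p=0$ (codimension two in $Y$, with no one-dimensional centers in between), and the coefficient-one pole of $\nu$ lies along the curve $E\cap E'$ ($E'$ the exceptional divisor), whose image is $p$. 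Your criterion would impose no condition on $s$ at all (there are no codimension-one proper centers), whereas condition (2) requires $s(p)=0$; so as written your argument proves a false equivalence. Note that Proposition~\ref{basic}~(2) goes the wrong way for you: intersections of lc centers are unions of lc centers, but lc centers need not arise as intersections.

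What is actually needed for (1) $\Rightarrow$ (2) is the statement: for \emph{every} lc center $C \subsetneq Y$ there exists an lc place $E^i_C$ of $C$ such that $f(E^i_C \cap E) = C$, i.e.\ a coefficient-one component of $F|_E$ dominating $C$ exactly (not merely contained in it). This is Lemma~\ref{one} in the paper, and it is precisely where the connectedness result Proposition~\ref{basic0} enters, applied to the pair of centers $\{Y, C\}$; you invoke Proposition~\ref{basic0} in passing but never extract this domination statement, and your codimension-one framing substitutes something false for it. (The converse direction (2) $\Rightarrow$ (1) of your pole-cancellation argument is fine once the codimension restriction is dropped: $s$ vanishing along $f(Q)$ for every coefficient-one $Q$ makes $\abs{f^*s}^2\, d\nu$ locally integrable.) You should also be aware that the paper's own proof of this corollary is much shorter and formally different: it does not redo any pole analysis, but extends $s$ by zero to the whole non-klt locus --- which is legitimate because $Y \cap Y_j$ is a union of lc centers properly contained in $Y$ by Proposition~\ref{basic}~(2), along which $s$ vanishes in either direction of the argument --- and then quotes Theorem~\ref{main1} for the extended section, together with Proposition~\ref{recover} to pass between $dV[\Psi]$ and $dV[\Psi]_Y$. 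If you repair your approach as above, you are essentially re-running the paper's proof of Theorem~\ref{main1} on the single component $Y$, which is a valid but longer route.
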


 A special case of Corollary~\ref{coro3} (and Theorem~\ref{main1}) is when 
 $Y$ is the only maximal lc center, i.e. when the non-klt locus of $(X, \Psi)$ is irreducible. 
  Note that, given one $Y$, there may exist  different statements  of $L^2$ extension (from different  $\Psi$'s) depending on which sets of subvarieties of $Y$ appear as properly contained log canonical centers of $(X, \Psi)$.

This article is organized as follows. In Section 2, we recall the theory of singularity of pairs and log canonical centers in our setting. We intend this section to be mostly self-contained for the convenience of readers whose primary interest is in analysis.    In Section 3, we revisit the theory of the Ohsawa measure and  also derive an $L^2$ extension theorem for a maximal log canonical center from \cite{D15} using Theorem~\ref{main1}. In Section 4, based on preparations from the previous sections, we give the proofs of the main results.

\qa

\noi \textbf{Acknowledgments.} 
The author would like to thank C.-Y. Chi for sharing his important observations and questions and O. Fujino for kindly providing Example~\ref{Fujino}. 
  The author would like to thank   J.-P. Demailly, T. Ohsawa, J. Kollár  and S. Boucksom for valuable and helpful conversations. 
  This research was supported by SRC-GAIA through NRF Korea grant No.2011-0030795 and by Basic Science Research Program through NRF Korea funded by the Ministry of Education (2018R1D1A1B07049683).

\qa

\section{Log canonical centers}

 We refer to \cite{D11} for the basic notions of  plurisubharmonic (psh for short) functions and  singular hermitian metrics of line bundles.  
 We  have some \textbf{{terminology and conventions}} used in this paper.

\begin{itemize}

\item

  We will often denote a singular hermitian metric $e^{-\vp}$ of a line bundle  simply by $\vp$ so that we can write additively both line bundles and  metrics as in $(L_1 + L_2 , \vp_1 + \vp_2)$.

\item

    A \emph{psh metric} is a singular hermitian metric of $L$ with semipositive curvature current (so that its local weight functions can be taken as psh  functions).  

\item

 Whenever integration is taken on a variety as in $\int_X$, it is taken on its regular locus as in $\int_{\xr}$. 

\item
A function on a complex manifold with values in $\RR \cup \{ -\infty \}$ is \emph{quasi-psh} if it is locally the sum of a psh function and a smooth $\RR$-valued function.

\end{itemize}

 \begin{definition}  \label{analyticsing} cf.  \cite[Def. 2.2]{D15}, \cite[Def. 9.1]{B20}
 A quasi-psh function on a complex manifold has \textbf{\emph{analytic singularities}} if there exists a real number $c>0$ and a coherent ideal sheaf $\mathfrak{a} \subset \OO_X$ such that locally we have
 
 \begin{equation}
  \vp = c \log \sum^{m}_{i=1} \abs{g_i}^2 + u 
  \end{equation} where $g_1, \ldots, g_m$ are local generators of $\mfa$ and $u$ a real-valued  smooth function. \footnote{Such a quasi-psh function was said to have neat analytic singularities in \cite{D15}. }  In this case, we will say that $\vp$ has singularities of type $\mfa^c$. 
 A psh metric of a line bundle is said to have \emph{analytic singularities} if its local weight functions  are psh with analytic singularities. 
 
 \end{definition}

    Let $L$ be a $\QQ$-line bundle on $X$. Let $D$ be an effective $\QQ$-divisor such that $D$ is $\QQ$-linearly equivalent to $L$. Note that $D$ is the  divisor $\frac{1}{m} \divisor (s_D)$ of a holomorphic section $s_D \in H^0 (X, mL)$ for some $m \ge 1$ such that $mL$ is a genuine holomorphic line bundle  on $X$. The section $s_D$ defines a psh metric of $L$ which can be   denoted by  ${\abs{s_D}}^{-\frac{2}{m}}$.

\subsection{Singularities of pairs}
 
 In the minimal model program and birational geometry~\cite{KM}, singularity of pairs such as $(X,D)$ plays an important and central role, cf. \cite{Ko97}, \cite{Ko13}. In this paper, we will work with singularity of pairs where $X$ is a complex manifold and $D$ is replaced by a psh metric or by a quasi-psh function.   
  
  \begin{definition}\label{Psi}
 Let $X$ be a complex manifold.  A \textbf{\emph{pair}} $(X, \psi)$  will refer to a pair of $X$ and a psh metric $e^{-\psi}$ with analytic singularities of a $\QQ$-line bundle $L$ on $X$.   Similarly, a pair $(X, \Psi)$ will refer to a pair of $X$ and  $\Psi$, a quasi-psh function with analytic singularities on $X$. 
 
  \end{definition}

 In the setting of the first sentence, we will  say that $(X, \psi)$ is associated to the $\QQ$-line bundle $K_X + L$.  
 We can also view $\psi$ as represented by a quasi-psh function (say $\Psi$) once we have a choice of a $C^{\infty}$ hermitian metric $h$ of $L$ :  i.e. define $\Psi$ by the relation  
$ e^{-\psi} = h e^{-\Psi}$ on $X$.   
 
 In the rest of this section, we introduce the fundamental notions from singularity of pairs (cf. \cite{KM}, \cite{Ko97}, \cite{Ka97}) in the setting of $(X, \Psi)$.  Since $\Psi$ has analytic singularities (say, of type $\mfa^c$),  these notions are well known in the setting of pairs taken with ideal sheaves (with formal exponents) $(X,  \mfa^c)$, see \cite[\S 8.1]{Ko13}.

  For simplicity of notation, we will discuss these fundamental notions mostly for pairs $(X, \Psi)$ (among the two in Definition~\ref{Psi}), but one can also put $(X, \psi)$ in the place of $(X, \Psi)$ (whenever it is clear).

 First, we will say that a quasi-psh function $\vp$ (and similarly for a psh metric) has \textbf{poles along} an effective ($\QQ$-)divisor $D$ if we have locally $\vp = \sum^m_{i=1} a_i \log \abs{g_i}^2  + u $ where $u$ is (bounded) $C^{\infty}$ and $\sum^m_{i=1} a_i \Div(g_i) $ is a local expression of the divisor $D$. 
 
 A \textbf{log resolution} for a quasi-psh function $\vp$ with analytic singularities of type $\mfa^c$ (and similarly for a psh metric with analytic singularities) is a smooth modification  (cf. \cite[II (10.1)]{DX})  $f: X' \to X$ such that the pullback $f^* \vp$ has poles along a divisor $G$ on $X'$ and $G+F$ is an snc (i.e. simple normal crossing) divisor where $F$ is the exceptional divisor of $f$. Such a log resolution exists since, as is well known, a principalization of the ideal sheaf $\mfa$ exists by Hironaka.  
 
 Now let $(X, \Psi)$ be as in Definition~\ref{Psi}. 
 Let $E \subset X'$ be a prime divisor. The \textbf{discrepancy} of $E$ with respect to $(X, \Psi)$ is defined by  $ a(E, X, \Psi) :=  \ord_E (K_{X'/X}) - \ord_E (\Psi)$ where $\ord_E (\Psi)$ is the generic Lelong number of $f^* \Psi$ along $E$, cf. \cite[\S 10]{B20}.   
 We have the following standard definition in this setting (cf. \cite{Ko97}, \cite{KM}). 

\begin{definition}\label{lcpair}

 Let $(X, \Psi)$  be as in Definition~\ref{Psi}. 
A pair $(X, \Psi)$  is lc  (resp. klt) if  $a(E, X, \psi) \ge -1$ (resp. $a(E, X, \psi) > -1$) for every prime divisor $E \subset X'$ in some (and hence, any \footnote{Cf. \cite{Ko97}, \cite{Ko13}}) log resolution $X' \to X$. 

\end{definition}

 We find it  convenient to indicate the discrepancies $a(E, X, \Psi)$ for $E$'s appearing in a log resolution by formally mimicking the standard notation for pairs $(X,D)$:

 \begin{equation}\label{discrep}
  K_{X'}  \equiv f^* (K_X + [\Psi])  + \sum_{E \subset X'} a(E, X, \Psi) E
 \end{equation}

\noi which generalizes the usual expression $ K_{X'}  \equiv f^* (K_X + D)  + \sum_{E \subset X'} a(E, X, \Psi) E$ when $\Psi$ has divisorial poles along an effective $\QQ$-divisor $D$ (cf. \cite[Notation 2.26]{KM}) where the equality is understood as numerical equivalence of divisors. In \eqref{discrep}, we  regard the notation $\equiv$ and $[\Psi]$ as purely formal.


\begin{remark1}

  In the case of a complex manifold $X$, in general one does not have a global canonical divisor $K_X$ which corresponds to the canonical bundle. However, as is well known, the notion of discrepancies (as defined below) is still well-defined, cf. \cite[\S 3]{F22}.

\end{remark1}

 \subsection{Log canonical centers}

 Next, we  recall the theory of  log canonical (lc) centers from algebraic geometry, which  can be regarded as  providing more refined information than  multiplier ideals alone.

  Let $(X,\Psi)$ be an lc  pair. We will say that $\Psi^{-1} (-\infty)$ is  the {pole set} of $\Psi$.   Define the \textbf{non-klt locus} of $(X, \Psi)$ to be the union of the images of those prime divisors $E$ on a log resolution $X'$ with discrepancy $a(E, X, \Psi) = -1$. The non-klt locus is the underlying set of the reduced subspace associated to the multiplier ideal $\JJ(\Psi)$, hence well defined, independent of the choice of a log resolution.   It is clear that the non-klt locus is a (possibly strict) subset of the pole set.

  \begin{example}

 Let $X = \CC^2$. Let $\Psi = \al \log \abs{x}^2 + \beta \log (\abs{x}^2 + \abs{y}^2)$. If $0 < \al <1$ and $\al + \beta =2$, the non-klt locus of $(X, \Psi)$ is equal to the origin, which is strictly contained in the pole set, the line $x = 0$. 

\end{example}

  A \textbf{log canonical center} (or an \textbf{lc center}) of $(X,\Psi)$ is an irreducible subvariety $Y \subset X$ that is the image of a prime divisor $E$ in a log resolution $X' \to X$ as above, with its discrepancy $a(E, X, \Psi)$ equal to $-1$ (i.e. the lowest possible value for an lc pair) on a log resolution of the pair $(X,\Psi)$. We will call such $E$ an \textbf{lc place} of $Y$, following \cite{Ka97}. \footnote{In \cite[Def. 1.3]{Ka97}, an lc place refers to an equivalence class of such $E$ determined by strict transform between different log resolutions. This convention is useful, but  in this paper, it is enough to  regard an  lc place as a prime divisor appearing in a  particular log resolution. }

  Given a pair,  when $Y$ is an lc center and there is no other lc center $Y_1$ such that $ Y_1 \supsetneq Y $, we call  $Y$ a \textbf{maximal lc center}.  The maximal lc centers are precisely the irreducible components of the non-klt locus of $(X, \Psi)$. 
 On the other hand, an lc center $Y$ is called \textbf{minimal} if there are no other lc centers properly contained in $Y$.

\begin{example}

 Let $Y \subset X$ be an irreducible smooth subvariety in a smooth projective variety. Then there exists an ample line bundle $L$ with a psh metric $\Psi$ such that $Y$ is a minimal lc center of the pair $(X, \Psi)$. 

\end{example}

\begin{example}\label{xyz}
 
  Let $X = \CC^3$ with coordinates $(x,y,z)$. Let $Y$  be the hyperplane given by $(z=0)$ and let $Z$ be the line given by $(x=y=0)$. Define $\Psi = \log \abs{z}^2 + \frac{2}{3} \log (\abs{x}^2 + \abs{y}^2)$ which is a psh function and also a psh metric for the trivial line bundle $L = \OO_X$. Then the pair $(X, \Psi)$ is lc and the lc centers are $Y$, $Z$ and the point $p := Y \cap Z$.  Note that the maximal lc centers are $Y$ and $Z$ while $p$ is the only minimal lc center of $(X, \Psi)$. 

\end{example}

\begin{example}\label{sncdiv}

  Let $D = D_1 + \ldots + D_m$ be an snc divisor ($m \ge 2$) with coefficients $1$ on a complex manifold $X$. Let $\Psi$ be a quasi-psh function having poles along $D$. Then $(X, \Psi)$ is lc. A connected component $C$  of $D_i \cap D_j$ ($i \neq j$) is irreducible and is an lc center of $(X, \Psi)$. 

\end{example}

 The following says that the property of a maximal lc center having a unique lc place is intrinsic, i.e. independent of the choice of a log resolution. 

\begin{proposition}\label{elc} 

 Let $Y$ be a maximal lc center of an lc pair $(X, \Psi)$. If there exists a log resolution $X' \to X$ of $(X, \Psi)$ such that $Y$ has a unique lc place $E$ on $X'$, then there cannot exist another log resolution of $(X, \Psi)$ where $Y$ has at least two lc places. 
\end{proposition}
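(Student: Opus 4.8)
The plan is to reduce the comparison of two arbitrary log resolutions to a single one dominating both, and then to show that the property ``$Y$ has a unique lc place'' cannot be destroyed by passing to a higher model. Suppose, for contradiction, that $f_1 : X_1 \to X$ is a log resolution on which the maximal lc center $Y$ has the unique lc place $E$, while $f_2 : X_2 \to X$ is another log resolution on which $Y$ has at least two lc places. First I would choose a common log resolution $g : W \to X$ dominating both, say with $p_i : W \to X_i$ and $g = f_i \circ p_i$; such a $W$ always exists. Since the strict transform on $W$ of a prime divisor $D \subset X_i$ defines the same divisorial valuation as $D$, it has the same discrepancy and the same image in $X$; hence distinct lc places of $Y$ on $X_2$ have distinct strict transforms on $W$, all of which are lc places of $Y$ on $W$. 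Thus $W$ carries at least two lc places of $Y$. It therefore suffices to prove the complementary statement that $W$, dominating the resolution $X_1$ on which $Y$ has a unique lc place, again has a unique lc place of $Y$; this contradiction will finish the proof.

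To prove this last statement, I would record that on $X_1$ the discrepancies in \eqref{discrep} define a crepant boundary $\Delta_1 = -\sum_{E \subset X_1} a(E,X,\Psi)\,E$ supported on the snc divisor $G + F$, so that $a(E', X, \Psi) = a(E', X_1, \Delta_1)$ for every prime divisor $E'$ over $X_1$. The lc place $E$ has strict transform $\tilde E$ on $W$, which is again an lc place of $Y$; and any lc place $E'$ of $Y$ on $W$ whose center on $X_1$ is a divisor must be the strict transform of that divisor, which is then an lc place of $Y$ on $X_1$, hence equal to $E$ by uniqueness. So the only way $W$ could acquire a second lc place of $Y$ is through a prime divisor $E'$ that is exceptional over $X_1$, i.e. with center $V := c_{X_1}(E')$ of codimension $\ge 2$ in $X_1$, satisfying $a(E',X,\Psi) = -1$ and $f_1(V) = Y$. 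The goal is to rule this out.

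Here is where the snc structure and the maximality of $Y$ enter, and this is the crux of the argument. Because $(X_1, \Delta_1)$ is an lc pair whose boundary is supported on an snc divisor, the center of any lc place is an lc center of $(X_1, \Delta_1)$, and such lc centers are exactly the strata of the reduced boundary, i.e. the nonempty intersections of the coefficient-one components (the components $E_i$ with $a(E_i, X, \Psi) = -1$); cf. Example~\ref{sncdiv} and \cite{Ka97}, \cite{Ko13}. Hence $V = E_{i_1} \cap \cdots \cap E_{i_r}$ with $r = \codim_{X_1} V \ge 2$, and each $E_{i_k}$ is itself an lc place of $(X,\Psi)$. Its image $f_1(E_{i_k})$ is therefore an lc center of $(X,\Psi)$ containing $f_1(V) = Y$; since $Y$ is a maximal lc center, this forces $f_1(E_{i_k}) = Y$ for every $k$. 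But then $E_{i_1}, \dots, E_{i_r}$ are at least two distinct lc places of $Y$ already present on $X_1$, contradicting the uniqueness of $E$. This rules out the exceptional case, so $W$ has the single lc place $\tilde E$, completing the contradiction.

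I expect the main obstacle to be the precise justification that, in the exceptional case, the center of an lc place of the snc pair $(X_1,\Delta_1)$ is genuinely one of the coefficient-one strata $E_{i_1}\cap\cdots\cap E_{i_r}$ with $r \ge 2$ — in particular that no lc place can have center a codimension $\ge 2$ subvariety sitting inside a single boundary component — and that this description is unaffected by the possibly negative coefficients occurring in $\Delta_1$. Once this standard feature of snc lc centers is in hand, the maximality of $Y$ does the rest essentially formally.
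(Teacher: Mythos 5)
Your proof is correct and takes essentially the same route as the paper's: reduce to a log resolution dominating $X_1$, invoke the fact that the center on $X_1$ of any lc place on the higher model must be a stratum (an irreducible component of an intersection) of the coefficient-one divisors, and then use maximality of $Y$ to force those divisors to be lc places of $Y$ on $X_1$, contradicting uniqueness. The standard snc-strata fact you flag as the main obstacle is exactly the ingredient the paper cites for this step, namely \cite[Lem.~1.11.14]{Ka14}, so your argument has no remaining gap.
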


\begin{proof} 
  
  Since the assertion is local, we may assume that $E_0 (:= E), E_1,  \ldots, E_m$ are all the lc places (associated to some lc centers) of $(X, \Psi)$ in the given log resolution $f: X' \to X$.  Suppose that there exists another log resolution $h$ where the lc center $Y$ has at least two lc places.  We may assume that it factors through $f$ so that it is given as the composition $h = f \circ g$ of $g: X'' \to X'$ and $f: X' \to X$. 
  
  Let $G \subset X''$ be an lc place of $Y$ that is genuinely different from $E$, i.e. not a strict transform of $E$. It is known that (cf. \cite[Lem. 1.11.14]{Ka14}) the center of $G$ on $X'$, i.e. $g(G)$, must be an irreducible component of some intersection among $E_0, \ldots, E_m$. Since $G$ is not a strict transform of $E$,   there should exist at least one $E_j \; (j \ge 1)$ such that $g(G) \subset E_j$. Then it is impossible to have $h(G) = Y$   since we have $h(G) \subset f(E_j)$ and $Y$ is a maximal lc center, which forces $f(E_j) = Y$. This contradicts to the condition that $E$ is the unique lc place of $Y$ on $X'$. 
\end{proof}

  We remark that the assertion of Proposition~\ref{elc} does not hold when $Y$ is not maximal. For example, let $(X, Y_1 + Y_2)$ be an snc pair with $\dim X = 2$ such that $Y_1$ and $Y_2$ intersects at $p \in X$ transversally. The blow up of $p$ provides a log resolution with a unique lc place $E$ for the lc center $p$. Further blow ups of the intersection between $E$ and the strict transforms of $Y_1$ and  $Y_2$ provide log resolutions with non-unique lc places for $p$. 
 
  An example of a maximal lc center with non-unique lc places is as follows, which we learned from Osamu Fujino.

\begin{example}[O. Fujino]\label{Fujino}

Let $X = \PP^2$ and let $C$ be a smooth quadric curve on $X$. Let $p \in C$ be a point and $L$ the tangent line of $C$ at $p$. Let $L_1$ and $L_2$ be two general lines passing through $p$. 
Now define an effective $\QQ$-divisor on $X$ by  $D := aC + b L + cL_1 +d L_2$ where $a,b,c,d \in \QQ \cap (0,1)$. 

Let $f: Y \to X$ be the blow up of $p$ with the exceptional divisor $E \subset Y$. There is a point $q \in E$ where the strict transforms of $C$ and $L$ intersect. Let $g: W \to Y$ be the blow up of $q$ with the exceptional divisor $G \subset W$. Then $h = f \circ g : W \to X$ is a log resolution of $(X,D)$ and we have $K_W + D_W = h^* (K_X + D)$ where $D_W$ has coefficients $3a + 2b + c + d -2$ for $G$ and $2a + b+ c+ d -1$ for $E'$ (the strict transform of $E$). When $a+b = 1$ and $a+c+d = 1$, the following holds: the pair $(X,D)$ is lc but not klt at $P$. The non-klt locus is exactly $P$, hence the lc center $P$ is both maximal and minimal. It has two lc places $G$ and $E'$. 
 
\end{example}

  The following connectedness result has been known from \cite[Thm. 1.6]{Ka97} (cf. \cite{A03}, \cite{A11}, \cite[Lem. 2.3]{F11}).

\begin{proposition}\label{basic0}

 Let  $(X, \Psi)$ be an lc pair.  Let $f: X' \to X$ be a log resolution of $(X, \Psi)$. 
 Let $W$ be the union of a finite set of lc centers of $(X, \Psi)$ equipped with the reduced structure. Assume that $f^{-1} (W)$ is of codimension $1$.  Let $T$ be the union of prime divisors on $X'$ given by the lc places of $(X, \Psi)$ whose images are contained in $W$. Then the restriction $f: T \to W$ has connected fibers. 

\end{proposition}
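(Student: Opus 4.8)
The plan is to reduce the statement to the Kollár--Shokurov connectedness theorem, in the form of Kawamata's connectedness result \cite[Thm. 1.6]{Ka97} (cf. \cite[Lem. 2.3]{F11}, \cite{A03}, \cite{A11}), whose analytic engine is relative Kawamata--Viehweg vanishing. The first step is to repackage the discrepancy data of $(X,\Psi)$ as a boundary on the log resolution. Writing $f^*(K_X+[\Psi]) = K_{X'} + B_{X'}$ with $B_{X'} := -\sum_{E} a(E,X,\Psi)\, E$, the log canonical hypothesis says that every coefficient of $B_{X'}$ is at most $1$, with coefficient exactly $1$ precisely for the lc places of $(X,\Psi)$. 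Thus the reduced divisor $B_{X'}^{=1}$ formed by the coefficient-one components is exactly the union $\Delta$ of all lc places, and by definition $T \subset \Delta$ is the subsum of those lc places whose image lies in $W$. Since $f$ is a log resolution, $B_{X'}$ is supported on an snc divisor, so $(X', B_{X'})$ is log smooth and the negative-coefficient part of $B_{X'}$ (occurring where $a(E,X,\Psi)>0$) is $f$-exceptional.

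The second step is to feed this into the connectedness lemma. Since $-(K_{X'}+B_{X'}) = -f^*(K_X+[\Psi])$ is pulled back from $X$, it is numerically trivial on every fiber of $f$, hence $f$-nef, and it is $f$-big because $f$ is birational. These are exactly the hypotheses under which the connectedness lemma asserts that the reduced coefficient-one locus has connected fibers; applied to $f: X' \to X$ it yields that $\Delta \cap f^{-1}(x)$ is connected for every $x \in X$. The underlying mechanism is that, after adding to $\Delta$ a suitable effective $f$-exceptional correction coming from $\lceil -(B_{X'}-\Delta)\rceil$, relative Kawamata--Viehweg vanishing makes $\OO_X = f_*\OO_{X'}$ surject onto the pushforward of the structure sheaf of the coefficient-one locus, and surjectivity from a local ring forces the fibers to be connected.

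The third step, which I expect to be the main obstacle, is to upgrade the global conclusion ``$\Delta$ has connected fibers over $X$'' to the desired ``$T$ has connected fibers over $W$''. The difficulty is twofold: a priori an lc place $E$ with $f(E)\not\subseteq W$ could still meet $f^{-1}(w)$ for some $w\in W$, so that $\Delta\cap f^{-1}(w)$ is strictly larger than $T\cap f^{-1}(w)$; and $W$ need not be normal, so one cannot argue merely through an identification $(f|_T)_*\OO_T = \OO_W$. This is precisely where the hypothesis that $f^{-1}(W)$ has codimension $1$ is used: it guarantees that over $W$ the entire fiber is carried by the divisorial preimage, so that I can run the vanishing argument of the second step relatively over a neighborhood of $W$, with $\Delta$ replaced by $T$ and keeping the same $f$-trivial class $-f^*(K_X+[\Psi])$. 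Concretely, I would apply the connectedness lemma to the restriction $f: f^{-1}(W)\to W$, noting that the components of $f^{-1}(W)$ which are not lc places contribute coefficients $<1$ to $B_{X'}$ and therefore drop out of the reduced coefficient-one locus, which over $W$ is exactly $T$. Verifying that $B_{X'}$ restricts to a boundary realizing $T$ as this reduced locus, and that the relative nefness and bigness survive passing to $W$, is the technical heart of the argument; once this bookkeeping is done, the connectedness of $T\cap f^{-1}(w)$ for each $w\in W$ follows directly from the cited theorem.
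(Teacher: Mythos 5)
Your steps 1 and 2 are correct: with $N:=\lceil -(B_{X'}-\Delta)\rceil$, which is effective and $f$-exceptional, one has $N-\Delta\equiv_f K_{X'}+\{B_{X'}\}$ (numerical equivalence over $X$, since the difference is $-f^*(K_X+[\Psi])$), the pair $(X',\{B_{X'}\})$ is klt and snc, so relative Kawamata--Viehweg vanishing kills $R^1f_*\OO_{X'}(N-\Delta)$ and yields the surjection $\OO_X=f_*\OO_{X'}(N)\twoheadrightarrow f_*\OO_{\Delta}(N)$, i.e.\ connectedness of $\Delta\cap f^{-1}(x)$ for every $x\in X$. The genuine gap is in step 3, and it is not bookkeeping. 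Decompose $\Delta=T+T''$, where $T''$ is the union of the lc places whose images are \emph{not} contained in $W$; these sit over lc centers that meet $W$ without being contained in it, so $T''$ passes through $f^{-1}(w)$ for points $w\in W$, and no shrinking of a neighborhood of $w$ removes it. If you now rerun the exact-sequence argument with $\Delta$ replaced by $T$, the sheaf you must control is $R^1f_*\OO_{X'}(N-T)$, and $N-T\equiv_f K_{X'}+\{B_{X'}\}+T''$ is of log canonical, \emph{not} klt, type precisely because $T''$ enters with coefficient one. Relative Kawamata--Viehweg vanishing therefore does not apply, and the required surjectivity of $\OO_X\to f_*\OO_{T}(N)$ over $W$ does not ``follow directly from the cited theorem'': the Kawamata/Koll\'ar--Shokurov connectedness lemma you invoke only gives connectedness of the \emph{full} non-klt fiber $\Delta\cap f^{-1}(x)$ over points of $X$, which is strictly weaker than connectedness of the fibers of $T\to W$. (Also, one cannot literally ``apply the connectedness lemma to the restriction $f:f^{-1}(W)\to W$'': that lemma is a statement about a boundary on the smooth total space $X'$, not about a morphism between the reducible, possibly non-normal spaces $f^{-1}(W)$ and $W$.)

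What closes this gap in the literature --- and what the paper itself does, since its proof of Proposition~\ref{basic0} consists of citing \cite[(4.4)]{A03}, \cite[(4.2)]{A11} and \cite[(2.3)]{F11}, adapted to projective bimeromorphic morphisms of complex manifolds --- is the Ambro--Fujino vanishing and torsion-freeness machinery for (embedded) simple normal crossing lc pairs, whose proof rests on Hodge-theoretic injectivity theorems and goes genuinely beyond Kawamata--Viehweg vanishing. The shape of that argument: every associated prime of $R^1f_*\OO_{X'}(N-T)$ is the generic point of the image of a stratum of the snc lc pair $(X',\{B_{X'}\}+T'')$; choosing the log resolution so that the divisor $f^{-1}(W)$ is compatible with the snc family, the hypothesis that $f^{-1}(W)$ has pure codimension one forces, by transversality, that no stratum of $T''$ has image contained in $W$; hence the connecting map $f_*\OO_{T}(N)\to R^1f_*\OO_{X'}(N-T)$, whose image is supported in $W$, must vanish, and surjectivity follows. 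Note this is where the codimension-one hypothesis is actually used --- not, as in your sketch, to ensure that ``the entire fiber is carried by the divisorial preimage.'' In short, you correctly isolated where the difficulty lies, but the tool you propose for it is too weak; the statement is not a corollary of the klt connectedness lemma but requires the stronger vanishing theorems for normal crossing pairs that the paper imports from Ambro and Fujino.
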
   
  
\begin{proof} 

 These are special cases for $X$ smooth of \cite[(4.4)]{A03}, \cite[(4.2)]{A11}, \cite[(2.3)]{F11}, with the obvious modifications in using relative vanishing theorems for $f$ now being a projective bimeromorphic morphism between complex manifolds.  
\end{proof}

\begin{corollary}\label{conf}

  Let $Y$ be an lc center of an lc pair $(X, \Psi)$.  If a log resolution $f: X' \to X$ of $(X, \Psi)$ contains a unique lc place $E \subset X'$ for $Y$, then the restriction morphism
  $f|_E : E \to Y$ has connected fibers. 
\end{corollary}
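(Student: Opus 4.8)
The plan is to deduce Corollary~\ref{conf} from the connectedness statement Proposition~\ref{basic0} by specializing the latter to a single center. First I would apply Proposition~\ref{basic0} with the finite set of lc centers taken to be the singleton $\{Y\}$, so that $W=Y$ with its reduced structure, and let $T\subseteq X'$ denote the union of those lc places of $(X,\Psi)$ whose images are contained in $Y$. By hypothesis $E\subseteq T$, and $E$ dominates $Y$. The codimension-one hypothesis of Proposition~\ref{basic0} is satisfied here precisely because $Y$, being an lc center, is dominated by the divisor $E$, so $f^{-1}(Y)$ already contains the divisor $E$ and the relevant part $T$ of $f^{-1}(Y)$ is purely of codimension one. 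Proposition~\ref{basic0} then yields that $f|_T\colon T\to Y$ has connected fibers.

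The remaining task, and the real content of the corollary, is to pass from connectedness of the fibers of $f|_T$ to connectedness of the fibers of the smaller morphism $f|_E$. Here I would use the uniqueness hypothesis decisively: writing $T=E\cup T'$, the hypothesis that $E$ is the \emph{unique} lc place with image equal to $Y$ forces every component of $T'$ to have image a proper lc subcenter of $Y$, so $f(T')=W'$ for some proper closed $W'\subsetneq Y$. Over the generic point $\eta$ of $Y$ none of these subcenters appears, whence $T_\eta=E_\eta$, and Proposition~\ref{basic0} shows this fiber is connected. Thus $f|_E$ has connected general fibers. Taking the Stein factorization $E\xrightarrow{\,g\,}Y^{\nu}\xrightarrow{\,\pi\,}Y$, with $g$ having connected fibers and $\pi$ finite, the generic connectedness forces $\deg\pi=1$, so that $\pi$ is finite and birational, i.e. the normalization of $Y$.

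The step I expect to be the main obstacle is upgrading this to connectedness of \emph{every} fiber of $f|_E$, equivalently to the bijectivity of $\pi$. The danger is exactly that the extra divisors $T'$ lying over $W'$ could simultaneously account for the connectedness of a special fiber $T_y$ while $f|_E$ acquires a disconnected fiber over that same $y$; this is the same gluing mechanism that, when a \emph{second} lc place dominates $Y$, drives Theorem~\ref{infinite} and is illustrated by Example~\ref{Fujino}. The plan to rule it out is to exploit that, by uniqueness, no component of $T'$ dominates $Y$: any gluing of distinct connected components of a fiber of $f|_E$ inside $T$ must take place over a point of $W'=\bigcup_{E'\subseteq T'}f(E')\subsetneq Y$, so one may argue by induction on $\dim Y$, applying the corollary to the strictly lower-dimensional lc centers carved out by $T'$, and combine this with the connectedness of $f|_T$ to conclude that $\pi$ is injective. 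Once $\pi$ is a finite birational bijection onto $Y$, the factorization $f|_E=\pi\circ g$ shows that $f|_E$ has connected fibers, as asserted.
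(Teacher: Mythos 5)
Up to the point you call ``the main obstacle,'' your argument is exactly the paper's: the paper also applies Proposition~\ref{basic0} with $W=Y$ and $T$ the union of all lc places whose images lie in $Y$, observes that every component of $T$ other than $E$ maps into the proper closed subset $Z\subsetneq Y$ swept out by the lc centers properly contained in $Y$, and deduces that $f|_E$ has connected fibers over $Y\setminus Z$; it then concludes in one line that the fibers are connected everywhere ``in view of Stein factorization.'' You are right that this last passage is not automatic: writing $f|_E=\pi\circ g$ with $g$ having connected fibers and $\pi\colon Y^{\nu}\to Y$ finite, generic connectedness only makes $\pi$ birational (indeed, since $E$ is smooth, $Y^{\nu}$ is normal and $\pi$ is the normalization of $Y$), and connectedness of \emph{every} fiber of $f|_E$ is equivalent to bijectivity of $\pi$. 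The gap in your proposal is that the induction you sketch to prove this bijectivity cannot be made to work.

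The gluing mechanism you want to rule out genuinely occurs even with a unique lc place. Take $X=\CC^2$ and $\Psi=\log\abs{y^2-x^2(x+1)}^2$, so that $(X,\Psi)$ is lc with non-klt locus the nodal cubic $D=\{y^2=x^2(x+1)\}$. Blowing up the node $p$ gives a log resolution; both the strict transform $D'$ and the exceptional curve $E_p$ have discrepancy $-1$, so the lc centers are $D$ (maximal, with unique lc place $D'$) and $p$ (with lc place $E_p$). Every input to your induction holds: $f|_T\colon D'\cup E_p\to D$ has connected fibers (the fiber over $p$ is $E_p$), and the corollary is trivially true for the zero-dimensional center $p$. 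Nevertheless $f|_{D'}\colon D'\to D$ is the normalization of the nodal cubic, whose fiber over $p$ consists of two points. So neither your induction nor any other argument can close the gap: the obstruction lives in the singularities of $Y$ itself, invisible to the smaller lc centers, and the statement as printed fails without a hypothesis forcing the normalization $Y^{\nu}\to Y$ to be bijective. (The same example shows the paper's own one-line appeal to Stein factorization is insufficient.) Your argument, and the paper's, do become correct when $Y$ is normal---for instance when $Y$ is a minimal lc center, which is normal by Proposition~\ref{basic}(3)---since a finite birational morphism onto a normal $Y$ is an isomorphism.
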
 

\begin{proof} 

 Let $T$ be the union of all the lc places on $X'$ whose images are contained in $Y$. Let $Z \subset Y$ be the closed analytic subset given as the union of those lc centers properly contained in $Y$. We see that $f: E \to Y$ has connected fibers over the Zariski open set $Y \setminus Z$ by Proposition~\ref{basic0}. Hence it also has connected fibers over $Y$ in view of Stein factorization. 
\end{proof}

 \begin{remark1}\label{Kollar19}
  In the case when a maximal lc center $Y$ has more than one lc places (say $E_1, E_2, \ldots, E_m$), there are examples of pairs (cf. Koll\'ar~\cite{Ko19}) where the morphism $E_j \to Y$ is not with connected fibers for some $j $. 
 \end{remark1}

 The following is the key fundamental properties of lc centers, which we recall from \cite{Ka97}, \cite{A03}, \cite[Thm. 2.4]{F11} in the current setting. \footnote{See also a recent work of Fujino \cite[Thm. 7.1]{F23} in the generality of $X$ being complex analytic spaces.} 

\begin{proposition}\label{basic}

 Let  $(X, \Psi)$ be an lc pair. 
 
 \begin{enumerate}

\item

 Locally there are at most finitely many lc centers of $(X, \Psi)$. 

 \item
  The intersection of two lc centers can be written as a union of lc centers.

\item
Let $p \in X$ be a point in the non-klt locus of $(X, \Psi)$.  Then there exists a unique minimal lc center $C_p$ at $p$, i.e. minimal with respect to inclusion among those lc centers passing through $p$.  Also $C_p$ is normal at $p$. 
\end{enumerate}

\end{proposition}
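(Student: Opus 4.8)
The plan is to fix one log resolution $f : X' \to X$ of $(X,\Psi)$ and to reduce all three statements to the combinatorics of the finitely many lc places on $X'$, using the connectedness result Proposition~\ref{basic0} as the main engine. These are the analytic analogues of the algebraic statements of \cite{Ka97}, \cite{A03}, \cite[Thm. 2.4]{F11}, and the arguments transfer once $f$ is regarded as a projective bimeromorphic morphism of complex manifolds (as in the proof of Proposition~\ref{basic0}). Throughout I would use the elementary adjunction computation: if $E_i, E_j$ are lc places meeting along the snc divisor on $X'$, then blowing up a component $V$ of $E_i \cap E_j$ produces an exceptional divisor $G$ of discrepancy $a(G,X,\Psi) = 1 + a(E_i,X,\Psi) + a(E_j,X,\Psi) = -1$, so $f(V) = f(G)$ is again an lc center. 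Thus images of components of intersections of lc places are themselves lc centers.

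For (1), there are only finitely many prime divisors in the support of the poles of $f^*\Psi$ together with the exceptional locus of $f$, hence only finitely many lc places $E_1, \ldots, E_r$ on $X'$. By \cite[Lem. 1.11.14]{Ka14} (already used in the proof of Proposition~\ref{elc}), the center on $X'$ of any lc place extracted on a higher model is an irreducible component of some intersection $\bigcap_{i \in I} E_i$. There are finitely many such components, and every lc center of $(X,\Psi)$ is the $f$-image of one of them, so locally there are at most finitely many lc centers.

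For (2), let $Y_1, Y_2$ be lc centers and fix $q \in Y_1 \cap Y_2$. I would apply Proposition~\ref{basic0} with $W = Y_1 \cup Y_2$ and $T$ the union of the lc places whose images are contained in $W$, so that $f^{-1}(q) \cap T$ is connected. Since $q$ lies on an lc place over $Y_1$ and on one over $Y_2$, connectedness yields a chain of lc places in $T$ through $f^{-1}(q)$ joining the two; at the step where the chain crosses from a place over $Y_1$ to one over $Y_2$, the image of the relevant component of the intersection is, by the adjunction computation above, an lc center through $q$ contained in $Y_1 \cap Y_2$. Hence every point of $Y_1 \cap Y_2$ lies on an lc center contained in $Y_1 \cap Y_2$, and together with the finiteness from (1) this exhibits $Y_1 \cap Y_2$ as a finite union of lc centers.

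For (3), uniqueness follows formally from (2): if $W_1, W_2$ are minimal lc centers through $p$, then by (2) the point $p$ lies on some lc center $V \subseteq W_1 \cap W_2$, and minimality of $W_1$ and of $W_2$ forces $V = W_1$ and $V = W_2$, whence $W_1 = W_2 =: C_p$. The normality of $C_p$ at $p$ is the technical heart and the step I expect to be the main obstacle. Here I would follow Kawamata's argument: let $T$ be the union of all lc places whose image is $C_p$ (by minimality these are exactly the lc places lying over $C_p$), and use Proposition~\ref{basic0} together with Corollary~\ref{conf} to see that $f : T \to C_p$ has connected fibers. Passing to the Stein factorization $T \to \overline{C_p} \to C_p$, the finite morphism $\overline{C_p} \to C_p$ is then a homeomorphism, and a relative Kawamata--Viehweg vanishing argument (as in \cite{A03}, \cite[Thm. 2.4]{F11}) identifies $f_* \OO_T$ with $\OO_{C_p}$; since minimality of $C_p$ excludes any smaller lc center that could cause branching of the Stein factorization, this forces $\overline{C_p} = C_p$ and gives normality of $C_p$ at $p$. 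The delicate point, and where the real work lies, is controlling this vanishing and Stein-factorization step in the bimeromorphic complex-analytic setting, exactly as needed in the proof of Proposition~\ref{basic0}.
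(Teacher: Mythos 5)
Your proposal is correct and follows essentially the same route as the paper: the core of both proofs is Proposition~\ref{basic0} combined with the discrepancy computation $a(G,X,\Psi)=1+a(E_i,X,\Psi)+a(E_j,X,\Psi)=-1$ for the blow-up of a component of $E_i\cap E_j$, which converts connectedness of the fiber over a point of $C_1\cap C_2$ into an lc center through that point contained in $C_1\cap C_2$; your ``chain'' formulation of (2) is just the direct version of the paper's argument by contradiction, and your deduction of uniqueness in (3) from (2) matches the paper. As for the normality assertion in (3), the paper does not prove it either but recalls it from \cite{Ka97}, \cite{A03}, \cite[Thm. 2.4]{F11}, which is exactly where your sketch also defers the vanishing/Stein factorization step, so nothing essential is missing.
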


 \begin{proof} 
 
  For the sake of convenience for readers, we recall some of the arguments (cf. \cite[Thm. 2.4]{F11}). (1) is clear from the existence of a log resolution. The first sentence of (3) follows from (2).  In the rest, we will show (2).

    Let $C_1$ and $C_2$ be two lc centers with nonempty intersection (which is not necessarily irreducible or connected). Let $p \in C_1 \cap C_2$. It suffices to show that there exists an lc center $C_3$ such that $p \in C_3 \subset C_1 \cap C_2$. Suppose that there does not exist such $C_3$ for given $p$. 
 Now apply Proposition~\ref{basic0} to the set $\{ C_1, C_2 \}$. Let $T = \sum_{j \in J} E_j$ where $J$ is a finite set. We may assume that the log resolution was chosen such that every lc center contained in the union $C_1 \cup C_2$ has at least one lc place occurring in $T$. 

  The image of each $E_j$ is an lc center contained in $C_1 \cup C_2$. Since we are assuming that none of them are both passing through $p$ and contained in $C_1 \cap C_2$, by restricting to an open neighborhood of $p$, we may assume that  the images $f(E_j)$ are all passing through $p$  and that $J$ is written as a disjoint union $J = J_1 \cup J_2$ such that if $i \in J_k$, then $f(E_i)$ is contained in $C_k$ but not in the intersection $C_1 \cap C_2$. 
 
 However, by the connectedness of the fiber $f^{-1} (p)$, there must exist $p \in J_1, q \in J_2$ such that $E_p$ and $E_q$ have nonempty intersection. This provides an lc place whose image is contained in $C_1 \cap C_2$, contradiction. 
 \end{proof}

  Note that if an lc center $Y$ of $(X, \Psi)$ is minimal in the sense given previously (i.e. there are no other lc centers properly contained in $Y$), then it is minimal at every point in $Y$. It follows that $Y$ is normal by (3) of the above proposition.

 \medskip

\section{$L^2$ extension theorems and the Ohsawa measure}
 
 In this section, we revisit the theory of the Ohsawa measure and  also derive an $L^2$ extension theorem for a maximal log canonical center from \cite{D15} using Theorem~\ref{main1}.

\subsection{Definition of the Ohsawa measure}

We first  revisit the definition and properties of the Ohsawa measure from \cite{D15}. We do this in a self-contained manner in the current log canonical setting, trying to provide explicit details for the Ohsawa measure (together with details prepared in \cite{KS20}).

 Let $(X, \Psi)$ be an lc (i.e. log canonical) pair as in Definition~\ref{lcpair} where $X$ is a complex manifold and $\Psi$ is a quasi-psh function with analytic singularities (i.e. neat analytic singularities in the sense of \cite{D15}, cf. Definition~\ref{analyticsing}). 
 Let $Y \subset X$ be a maximal lc center of the pair. 
  Let $\yr$ be the regular locus of $Y$, i.e. the set of regular points. 
  
  For our purpose in this paper, it is more natural to first define the Ohsawa measure on $Y$. One can see that the full Ohsawa measure (Definition~\ref{Omeasure}) on the entire non-klt locus is recovered from restrictions to each $Y$, see Proposition~\ref{recover}.

\begin{definition}  \cite[Section 2]{D15} \label{iber}
Let $dV_X$ be a smooth volume form on $X$.   The  \textbf{ \emph{Ohsawa measure}} $dV[\Psi]_Y$ of $\Psi$ on $Y$ (with respect to $dV_X$) is 
a positive measure $d\mu$ on $\yr$ satisfying the following condition:  for every $g$, a real-valued compactly supported continuous function on $\yr$ and for every $\tilde{g}$, a compactly supported extension of $g$ to $X$, we have the relation 
 
 \begin{equation}\label{dvp}
 \int_{\yr} g \; d\mu = \lim_{t \to -\infty} \int_{\{x \in X, t < \Psi(x) < t+1 \}} \tilde{g} e^{-\Psi} dV_{X}. 
\end{equation}

\end{definition}

If it exists, the Ohsawa measure is unique from basic properties of measures.   The existence is due to \cite[Prop.  4.5 (a)]{D15} in this generality (after the initial cases from \cite{O4},  \cite{O5} which was also used in \cite{GZ14}), whose arguments we will recall in our setting. 
 In its notation $dV[\Psi]_Y$,  its dependence on $dV_X$ is suppressed.

\begin{proposition}\label{exist}

 Let $Y$ and $(X, \Psi)$ be as above.  Then the Ohsawa measure $dV[\Psi]_Y$ of $(X, \Psi)$ on $Y$ exists. The measure $dV[\Psi]_Y$ has the property of  putting no mass on closed analytic subsets. 

 \end{proposition}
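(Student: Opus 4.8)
The plan is to evaluate the limit on the right-hand side of \eqref{dvp} by passing to a log resolution, where the singularity of $\Psi$ becomes divisorial so that the integral can be analyzed slab by slab, and then to exhibit the resulting measure as a direct image of explicit measures supported on the lc places of $Y$. This is the analytic counterpart of Demailly's idea, and it is what will make both assertions transparent.

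First I would fix a log resolution $f : X' \to X$ of $(X,\Psi)$ and transport the integral. Since $f$ is biholomorphic off a set of measure zero, the change of variables gives
\[
\int_{\{t<\Psi<t+1\}} \tilde g\, e^{-\Psi}\, dV_X \;=\; \int_{\{t<f^{*}\Psi<t+1\}} (f^{*}\tilde g)\, e^{-f^{*}\Psi}\, \abs{J_f}^2\, dV_{X'},
\]
where $J_f$ is the holomorphic Jacobian of $f$. By the discrepancy relation \eqref{discrep}, near a prime divisor $E$ occurring in the resolution, with local equation $z$, the density $e^{-f^{*}\Psi}\abs{J_f}^2$ equals a smooth positive function times $\abs{z}^{2a(E,X,\Psi)}$, the exponent being $2(\ord_E K_{X'/X} - \ord_E \Psi) = 2a(E,X,\Psi)$. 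The heart of the matter is then the transverse (slab) integral: using Fubini along the snc divisor, the shrinking region $\{t<f^{*}\Psi<t+1\}$ localizes to a neighborhood of the pole divisor, and near a single component $E$ with $\ord_E \Psi = m > 0$ a one–variable computation gives
\[
\int_{\{t < m\log\abs{z}^2 < t+1\}} \abs{z}^{2a(E,X,\Psi)}\, \tfrac{i}{2}\, dz\wedge d\bar z \;=\; \begin{cases} \dfrac{\pi}{m}, & a(E,X,\Psi)=-1,\\[1mm] O\!\left(e^{\kappa t}\right)\to 0, & a(E,X,\Psi)>-1, \end{cases}
\]
for some $\kappa>0$. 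Thus in the limit $t\to-\infty$ only the lc places survive, each contributing a constant normal factor, while the tangential integration leaves a measure on $E$. This identifies the limit with $\int_{\yr} g\, d\mu$, where $d\mu := \sum_{E} (f|_E)_*\, d\mu_E$ is summed over the lc places $E$ of $Y$ and each $d\mu_E$ is absolutely continuous with respect to Lebesgue measure on $E$.

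Both assertions then follow. For existence, the slab integrals converge (and not merely as a $\liminf$): the contributions of the lc places are constant in $t$ and the others decay. Choosing the extension $\tilde g$ supported near $\operatorname{supp} g \subset \yr$, the relevant contributions localize to lc places whose image meets $\yr$, and there $f^{*}\tilde g = g\circ f$; hence the limit depends only on $g=\tilde g|_{\yr}$ and defines a positive Borel measure valued in $[0,\infty]$, the value $+\infty$ occurring precisely as in Theorem~\ref{infinite}. For the second property, each $d\mu_E$ has a density with respect to Lebesgue measure on $E$, so its direct image on $\yr$ is again given by a density; since any closed analytic subset $W\subsetneq \yr$ is Lebesgue–null, and the preimage $(f|_E)^{-1}(W)$ is a proper analytic subset of $E$ whenever $f(E)=Y$, we get $d\mu(W)=0$, even where the density fails to be locally integrable.

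I expect the main obstacle to be the local analysis at the corners of the snc divisor, that is, at the intersections of a place $E$ over $Y$ with the other components of the exceptional and polar divisor. There the slab is a product of annuli and the normal integral is a genuinely multivariable computation, and one must verify that these corner contributions either vanish (when the transverse component is klt, its factor $\abs{z'}^{2a'}$ with $a'>-1$ being locally integrable) or else appear only as a non–$L^1_{\mathrm{loc}}$ singularity of the density along a proper analytic subset of $\yr$ (when the transverse component is a further lc place mapping into a non-maximal center contained in $Y$, or into an overlap $Y\cap Y'$ with another maximal center). The point to secure is that such behavior is consistent with the infinity phenomenon of Theorem~\ref{infinite} and never produces a positive mass on a lower–dimensional set depending on the chosen extension. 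Checking that this local picture is independent of the log resolution and glues to a single well-defined density on $\yr$ is the key technical step.
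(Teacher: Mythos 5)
Your proposal is correct and takes essentially the same route as the paper: pass to a log resolution, use the slab computation to see that only the discrepancy $-1$ components survive the limit, identify $dV[\Psi]_Y$ as the direct image $\sum_{E}(f|_E)_*\,d\mu_E$ over the lc places of $Y$ (this is the paper's Lemma~\ref{demu}), and deduce the no-mass property from the fact that $(f|_E)^{-1}(W)$ is a proper analytic, hence Lebesgue-null, subset of $E$. The corner analysis you single out as the key technical step is precisely what the paper delegates to the citations \cite[Prop. 4.5 (a)]{D15} and \cite[Prop. 3.3]{KS20}, whose conclusion --- the restriction of the measure to $E_j$ has poles along $(\sum_{i \neq j} E_i + F)|_{E_j}$, with coefficient-one (lc) components giving exactly the non-$L^1_{\mathrm{loc}}$ singularities consistent with Theorem~\ref{infinite} --- matches the behavior you predicted.
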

 
\begin{proof}

 Let $f: X' \to X$ be a log resolution of the pair $(X, \Psi)$.  Let $E_1, \ldots, E_m$ be the lc places of $Y$ and let $T := E_1 \cup \ldots \cup E_m$. As in \eqref{discrep}, we can indicate discrepancies (for a $\QQ$-divisor $F$ on $X'$) by writing
 \begin{equation}\label{discrep2}
 K_{X'} + E_1 + \ldots + E_m + F \equiv f^* (K_X + [\Psi]). 
 \end{equation}
 
   When we take the pull back of the RHS of \eqref{dvp} by $f$, we are in the position to consider the Ohsawa measure, say $d\nu$, of the quasi-psh function $f^* \Psi$ with respect to the volume form $f^* dV_X$ on $X'$ (which may have zeros along a divisor, cf. \cite[Section 3]{KS20}), i.e. the measure $d\nu$ satisfies

  \begin{equation}\label{dvp1}
\int_{T}  f^* g \; d\nu = \lim_{t \to -\infty} \int_{\{x \in X', \; t < f^* \Psi(x) < t+1 \}} (f^* \tilde{g}) \; e^{-f^* \Psi} f^* dV_{X}. 
\end{equation}

From  \cite[Prop.  4.5 (a)]{D15} (also see  \cite[Prop. 3.3 (2)]{KS20} for some more details spelled out), such $d\nu$ exists as a measure on the regular locus of $T$. (Note that $f^* dV_X$ may have zeros along a divisor, which does not matter for such existence.)  Moreover $d\nu$ is given by the collection of its restrictions to each $E_j$ or more precisely to each connected component of the regular locus of $T$, namely to each  

\begin{equation}\label{ejp}
E_j' := E_j \setminus ( \cup_{i \neq j} E_i)
\end{equation}
for $1 \le j \le m$.
 We will denote this restriction by $d\nu|_{E_j}$ (i.e. rather than $d\nu|_{E_j'}$). 
 By  \cite[Prop. 3.3 (2')]{KS20}, the restriction  $d\nu|_{E_j}$ has  poles along the divisor 

\begin{equation}\label{pole}
 (\sum_{i \neq j} E_i  + F )|_{E_j}. 
\end{equation}

Now the existence of $dV[\Psi]_Y$ is provided by Lemma~\ref{demu}. 
Since $d\nu$ has the property of  putting no mass on closed analytic subsets (cf. \cite[Prop. 3.3]{KS20}), so does its direct image $dV[\Psi]_Y$.  (See \cite[(1.3)]{BBEGZ} for more  on this property.)
\end{proof} 

\begin{lemma}\label{demu}
The Ohsawa measure  $dV[\Psi]_Y$ is equal to the direct image measure $ f_* d\nu = \sum^m_{j=1} f_* (d\nu|_{E_j})$ of the measure $d\nu$. 

\end{lemma}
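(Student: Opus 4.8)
The plan is to verify that the candidate measure $f_* d\nu$ satisfies the defining relation \eqref{dvp} of the Ohsawa measure $dV[\Psi]_Y$, and then to invoke the uniqueness of such a measure noted after Definition~\ref{iber}. Since the existence of $d\nu$ on the regular locus of $T$ is already granted by the cited results of \cite{D15} and \cite{KS20}, the genuine content of the lemma is the compatibility of the two limiting procedures in \eqref{dvp} and \eqref{dvp1} under the modification $f$.

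First I would record the key geometric fact: because $f: X' \to X$ is a modification, it restricts to a biholomorphism over the complement of a closed analytic subset, and both the exceptional locus in $X'$ and its image in $X$ are negligible (measure zero) for $dV_X$ and for $f^* dV_X$ respectively. Since $f^* \Psi = \Psi \circ f$, the sublevel annulus pulls back exactly, that is $f^{-1}(\{t < \Psi < t+1\}) = \{t < f^* \Psi < t+1\}$. Hence for each fixed $t$ the change-of-variables formula for the modification $f$ (legitimate because $f^* dV_X$ already incorporates the Jacobian and the locus where $f$ fails to be a local biholomorphism is negligible) gives
\[
\int_{\{t < \Psi < t+1\}} \tilde{g}\, e^{-\Psi}\, dV_X
= \int_{\{t < f^* \Psi < t+1\}} (f^* \tilde{g})\, e^{-f^* \Psi}\, f^* dV_X .
\]
Here $f^* \tilde{g}$ is a compactly supported extension of $f^* g$ to $X'$, so the right-hand side is precisely the quantity whose limit defines $d\nu$ in \eqref{dvp1}; note the integrand is bounded and compactly supported, since on the annulus $e^{-\Psi}$ is controlled by $e^{-t}$, so there is no integrability issue.

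Next I would pass to the limit $t \to -\infty$ on both sides. By the defining property \eqref{dvp1} of $d\nu$ the right-hand side converges to $\int_T (f^* g)\, d\nu$, and by the very definition of the direct image measure this equals $\int_{\yr} g\, d(f_* d\nu)$; consequently the left-hand side converges as well, and
\[
\int_{\yr} g\, d(f_* d\nu) = \lim_{t \to -\infty} \int_{\{t < \Psi < t+1\}} \tilde{g}\, e^{-\Psi}\, dV_X .
\]
This is exactly the relation \eqref{dvp}, so $f_* d\nu$ satisfies the defining property of $dV[\Psi]_Y$, and by uniqueness the two measures coincide. Finally, the decomposition $f_* d\nu = \sum_{j=1}^m f_*(d\nu|_{E_j})$ is immediate: the regular locus of $T$ is, up to a measure-zero set, the disjoint union of the open pieces $E_j'$ of \eqref{ejp}, so $d\nu = \sum_j d\nu|_{E_j}$ and the direct image is additive.

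The main obstacle is bookkeeping rather than a deep difficulty: one must ensure the change of variables is valid despite $f^* dV_X$ degenerating along the exceptional divisor and despite the singularity of $e^{-\Psi}$, and that the direct image measure is genuinely carried by $\yr$. The latter is handled by the fact that $d\nu$, and hence $f_* d\nu$, puts no mass on analytic subsets, so the singular locus $Y \setminus \yr$ is invisible to the integration. Once these negligibility points are secured, the equality of integrals for each fixed $t$ transfers directly through the limit and yields the claim.
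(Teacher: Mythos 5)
Your proposal is correct and follows essentially the same route as the paper: equate the integrals inside the two limits via change of variables for the modification $f$ (what the paper calls ``basic properties of integration of top degree forms''), conclude that $f_* d\nu$ satisfies the defining relation \eqref{dvp}, and invoke the uniqueness of the Ohsawa measure. The extra bookkeeping you supply (negligibility of the exceptional loci, boundedness of $e^{-\Psi}$ on the annulus, and the decomposition of the regular locus of $T$ into the pieces $E_j'$) is consistent with, and merely makes explicit, what the paper leaves implicit.
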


\begin{proof}
Since $f: X' \to X$ is modification, from basic properties of integration of top degree forms, we have the equality of the integrals (inside the limit) on the RHSs of \eqref{dvp} and \eqref{dvp1}.  Hence the direct image $f_* d\nu$ of $d\nu$ under $f = f|_T : T \to Y$ satisfies the condition \eqref{dvp}. By Definition~\ref{iber} (and its uniqueness),   it is equal to the Ohsawa measure, i.e. we have $dV[\Psi]_Y = f_* d\nu$. 
\end{proof}

\begin{remark1}

Lemma~\ref{demu} was pointed out by Demailly in this generality in a discussion with the author, although only the case $m=1$ is treated in \cite[Prop. 4.5]{D15}. 

\end{remark1}

\begin{proposition}\label{recover}

The full Ohsawa measure $dV[\Psi]$ defined on the non-klt locus $Y:= N(\Psi)$ of $(X, \Psi)$ is recovered from the collection of its `restrictions'  $dV[\Psi]_{Z}$  to each irreducible component $Z$ of the non-klt locus.

\end{proposition}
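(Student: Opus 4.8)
The plan is to decompose the regular locus of the non-klt locus into disjoint pieces, one lying inside the smooth locus of each irreducible component, and to match the two notions of Ohsawa measure on each piece. Write $Y = Z_1 \cup \cdots \cup Z_r$ for the decomposition into irreducible components (the maximal lc centers) and set
\[
Z_i^{\circ} := (Z_i)_{\reg} \setminus \bigcup_{j \neq i} Z_j .
\]
A point of $Y$ is regular exactly when it lies on a single component and is a regular point of that component, so $\yr = \bigsqcup_{i=1}^{r} Z_i^{\circ}$. Moreover each $Z_i^{\circ}$ is simultaneously open and closed in $\yr$ (its complement in $\yr$ is the union of the remaining $Z_j^{\circ}$), and $Z_i^{\circ} \subset (Z_i)_{\reg}$ is open; hence the indicator $\mathbf{1}_{Z_i^{\circ}}$ is continuous on $\yr$.

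The heart of the argument is a comparison on a single piece. Let $g$ be a real-valued compactly supported continuous function on $\yr$ with $\Supp(g) \subset Z_i^{\circ}$. Its support $K := \Supp(g)$ is disjoint from the closed set $\bigcup_{j \neq i} Z_j$, so I may pick an open $U \supset K$ in $X$ with $U \cap \bigcup_{j \neq i} Z_j = \emptyset$. Multiplying an arbitrary compactly supported extension of $g$ by a cutoff that equals $1$ near $K$ and is supported in $U$, I obtain a single compactly supported $\tilde{g}$ on $X$, supported in $U$, with $\tilde{g} = g$ on $Z_i^{\circ}$ and $\tilde{g} \equiv 0$ on $\bigcup_{j \neq i} Z_j$. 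Since $\tilde{g}$ then vanishes on $(Z_i)_{\reg} \setminus Z_i^{\circ} = (Z_i)_{\reg} \cap \bigcup_{j \neq i} Z_j$, this one $\tilde{g}$ is at once an admissible extension of $g$ in the sense of Definition~\ref{Omeasure} (matching $g$ on $\yr$) and in the sense of Definition~\ref{iber} (matching $g$ on $(Z_i)_{\reg}$). The right-hand side of the defining relation \eqref{dvp} is then literally the same tube integral over $\{ t < \Psi < t+1 \}$ for both measures, and letting $t \to -\infty$ gives
\[
\int_{\yr} g \, dV[\Psi] = \int_{(Z_i)_{\reg}} g \, dV[\Psi]_{Z_i},
\]
with the limit existing by Proposition~\ref{exist}.

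Finally I would assemble the statement. For an arbitrary compactly supported continuous $g$ on $\yr$, the pieces $g_i := g \cdot \mathbf{1}_{Z_i^{\circ}}$ are continuous by the open-closed property, compactly supported in $Z_i^{\circ}$, and sum to $g$; applying the comparison to each and adding yields
\[
\int_{\yr} g \, dV[\Psi] = \sum_{i=1}^{r} \int_{(Z_i)_{\reg}} g_i \, dV[\Psi]_{Z_i},
\]
which expresses the full Ohsawa measure through the restrictions $dV[\Psi]_{Z_i}$ and simultaneously yields its existence once those of Proposition~\ref{exist} are known. I expect the only genuine obstacle to be the construction of the common extension $\tilde{g}$ in the middle step: Definition~\ref{Omeasure} constrains an extension only along $\yr$ and so is insensitive to the singular points of $Y$ that lie on several components, whereas Definition~\ref{iber} for $dV[\Psi]_{Z_i}$ constrains it on all of $(Z_i)_{\reg}$, including such points; separating $\Supp(g)$ from the other components reconciles the two, and the fact that $dV[\Psi]_{Z_i}$ puts no mass on the lower-dimensional overlap $(Z_i)_{\reg} \setminus Z_i^{\circ}$ (Proposition~\ref{exist}) ensures nothing is lost in passing between $Z_i^{\circ}$ and $(Z_i)_{\reg}$.
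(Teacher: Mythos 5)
Your proof is correct and follows essentially the same route as the paper: the identical disjoint decomposition $\yr = \bigsqcup_i Z_i^{\circ}$ into clopen pieces, the identification of $dV[\Psi]$ restricted to each piece with $dV[\Psi]_{Z_i}$, and the no-mass-on-analytic-subsets property to pass between $Z_i^{\circ}$ and $(Z_i)_{\reg}$. Your construction of the common admissible extension $\tilde{g}$ supported away from the other components is simply the explicit verification of what the paper dismisses as ``easily seen from comparing Definition~\ref{iber} and Definition~\ref{Omeasure}.''
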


\begin{proof}

Here, by  the restriction of $dV[\Psi]$ (which is a measure defined on $\yr$)  to $Z$, we mean its restriction (as a measure) to a nonempty Zariski open subset $Z'$. This restriction is nothing but $dV[\Psi]_{Z}$ as is easily seen from comparing Definition~\ref{iber} and Definition~\ref{Omeasure}.      (Note that, thanks to the property of  putting no mass on closed analytic subsets, $dV[\Psi]_{Z}$  is determined already by its restriction to a nonempty Zariski open subset of $Z$.)   
It then only remains to observe the following:  the regular locus of $Y$   can be written as the disjoint union 

\begin{equation}\label{disjo}
 \yr = \bigcup_{j \in J} \; Y'_j
\end{equation} 
of complex manifolds (of possibly different dimensions) where each complex manifold $Y'_j$ is the Zariski open subset of the irreducible component $Y_j$ of (a connected component of) $Y$ determined by the relation

$$ Y_j \setminus Y_j' = (Y_j)_{\sing} \; \cup ( {\displaystyle \cup_{k \neq j}} Y_k \cap Y_j ).  $$ \end{proof}

\begin{example}

When applying Theorem~\ref{main1} to Example~\ref{xyz},  the only singularity of the Ohsawa measure $dV[\Psi]$ occurs at $p = Y \cap Z$. If $s$ is $L^2$ with respect to $dV[\Psi]$ in a neighborhood of $p$, then it should vanish at $p$.

\end{example}

\subsection{$L^2$ extension theorems}

We first recall from \cite{D15} the following $L^2$ extension theorem  of Demailly  for log canonical pairs.
We will then derive a version, Theorem~\ref{extension} which extends sections from each component of the non-klt locus.

\begin{theorem}\label{extension0} \cite[Thm. 2.8 and Remark 2.9 (b)]{D15}
Let $(X, \omega)$ be a weakly pseudoconvex K\"ahler manifold. 	
	 Let $(X, \psi)$ be an lc pair associated to an adjoint $\QQ$-line bundle $K_X + L$ as in Definition~\ref{Psi} (so that $\psi$ is a psh metric for $L$). 	 
	 Let $h$ be a  $C^{\infty}$ hermitian metric of $L$.  Let $\Psi$ be the quasi-psh function defined by the relation  $ e^{-\psi} = h e^{-\Psi}$.  Assume that, for some $\delta > 0$,

\begin{equation}\label{curvature}
   i \Theta(L, h) +  \alpha \iddb \Psi =   i \Theta (L,\psi) +  (\alpha -1) \iddb \Psi  \ge 0
\end{equation}

\noi for all $\alpha \in [1, 1+\delta]$. 
 Let $B$ be a $\QQ$-line bundle on $X$ such that $K_X + L + B$ a $\ZZ$-line bundle. Let $b$ be a psh metric of $B$. 

Let $W$ be the non-klt locus of $(X, \psi)$. 	 
	  If we have
  a holomorphic section $s \in H^0 (\Wr, (K_X + L + B)|_{\Wr})$ satisfying 

\begin{equation}\label{input00}
 \int_{\Wr} \abs{s}_{\omega, h, b}^2  dV[\Psi] < \infty ,
\end{equation}

\noi  then there exists a holomorphic section $\wt{s} \in H^0 ( X, K_X + L + B) $ such that we have $\tilde{s}|_{\Wr} = s$ and moreover

\begin{equation}\label{output00}
  \int_X   \abs{\wt{s}}_{\omega, h, b}^2  \gamma (\delta \Psi) e^{-\Psi}  dV_{\omega}        \le   \frac{34}{\delta}  \int_{\Wr} \abs{s}_{\omega, h, b}^2  dV[\Psi].  
 \end{equation}

\end{theorem}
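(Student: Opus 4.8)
The plan is to prove this by the twisted $\db$-technique that underlies all Ohsawa--Takegoshi type theorems, engineered so that the singular weight concentrates on the non-klt locus and the Ohsawa measure emerges from the defining limit \eqref{dvp}. First I would fix a level $t \ll 0$ and build a smooth trial extension: starting from $s$ on $\Wr$, I extend it locally by genuine holomorphic functions (available since $W$ is a submanifold along $\Wr$) and patch by a partition of unity to obtain a smooth global section $\sigma$ with $\sigma|_{\Wr}=s$. I then multiply by a cutoff $\chi = \theta(\Psi)$, where $\theta(\Psi)=1$ for $\Psi < t$ and $\theta(\Psi)=0$ for $\Psi > t+1$. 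Since $\Psi \to -\infty$ along the pole set, $\chi \equiv 1$ near $W$, so $\chi\sigma = \sigma$ there; and the obstruction to holomorphicity, $\db(\chi\sigma)=\theta'(\Psi)\,\db\Psi\cdot\sigma$, is supported in the annular region $\{t < \Psi < t+1\}$ appearing in \eqref{dvp}.

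Next I would solve $\db u_t = \db(\chi\sigma)$ with a weighted $L^2$ estimate. The engine is a twisted Bochner--Kodaira--Nakano inequality: one introduces auxiliary functions $\tau=\tau(\Psi)$ and $A=A(\Psi)$ of the weight and tests against the modified weight $\gamma(\delta\Psi)\,e^{-\Psi}$. The curvature term produced by this twisting is a combination $i\Theta(L,\psi) + (\alpha-1)\iddb\Psi$ with $\alpha$ ranging over $[1,1+\delta]$, which is nonnegative precisely by hypothesis \eqref{curvature}; this is the only point where the curvature assumption enters, and the admissible window $[1,1+\delta]$ is exactly matched to the derivative of the twisting function. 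Choosing $\theta,\tau,A$ explicitly as in \cite{D15} yields a solution $u_t$ with $\int_X \abs{u_t}_{\omega,h,b}^2\,\gamma(\delta\Psi)\,e^{-\Psi}\,dV_\omega$ bounded by a constant times $\int_{\{t<\Psi<t+1\}}\abs{\sigma}_{\omega,h,b}^2\,e^{-\Psi}\,dV_X$, where the numerical constant $34/\delta$ records the optimization over these choices, the factor $1/\delta$ coming from the curvature slack.

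Then $\st_t := \chi\sigma - u_t$ is holomorphic, and I would verify that it restricts to $s$ on $\Wr$. The key mechanism is that the output weight $\gamma(\delta\Psi)\,e^{-\Psi}$ is too singular transversally to $W$ to admit nonzero boundary values: finiteness of $\int_X \abs{u_t}_{\omega,h,b}^2\,\gamma(\delta\Psi)\,e^{-\Psi}\,dV_\omega$ forces $u_t$ to vanish on $\Wr$, so $\st_t|_{\Wr} = (\chi\sigma)|_{\Wr} = s$. Finally, letting $t \to -\infty$, the right-hand side of the estimate converges, by the very definition \eqref{dvp} of the Ohsawa measure, to $\int_{\Wr}\abs{s}_{\omega,h,b}^2\,dV[\Psi]$, which is finite by hypothesis \eqref{input00}; a weak-$L^2$ compactness argument extracts a weak limit $\st \in H^0(X, K_X+L+B)$ satisfying \eqref{output00}, and the uniform bound guarantees that $\st|_{\Wr}=s$ persists in the limit.

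The main obstacle is the simultaneous calibration of the three auxiliary functions $\theta,\tau,A$: they must keep the twisted curvature term nonnegative using only the restricted positivity \eqref{curvature} on $[1,1+\delta]$, make the error an annular integral that converges exactly to the Ohsawa measure via \eqref{dvp}, and force the correction $u_t$ to vanish on $W$ -- all while yielding a clean constant. A secondary subtlety, special to this log canonical generality, is that $W$ is singular and $dV[\Psi]$ lives only on $\Wr$; the limiting argument must invoke that $dV[\Psi]$ puts no mass on closed analytic subsets (Proposition~\ref{exist}), so that integrating over $\Wr$ rather than $W$ loses nothing, and must control the trial extension $\sigma$ uniformly near the singular strata of $W$.
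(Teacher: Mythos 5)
This theorem is not proved in the paper at all: it is imported verbatim from Demailly \cite[Thm. 2.8, Rem. 2.9 (b)]{D15}, so there is no internal proof to compare against, and your sketch is in effect a reconstruction of Demailly's own argument. Structurally your outline does follow that argument: a smooth trial extension cut off at the level sets $\{t<\Psi<t+1\}$, a twisted Bochner--Kodaira estimate whose curvature term is controlled exactly by \eqref{curvature} on the window $[1,1+\delta]$, and a limit $t\to-\infty$ in which the annular error integral converges to the Ohsawa measure by \eqref{dvp}. Deferring the calibration of $\theta,\tau,A$ and the constant $34/\delta$ to \cite{D15} is acceptable for an outline of a quoted theorem.

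However, there is one genuine flaw in your mechanism for the restriction property $\st|_{\Wr}=s$, and it is self-contradictory as stated. You claim that finiteness of $\int_X \abs{u_t}^2_{\omega,h,b}\,\gamma(\delta\Psi)e^{-\Psi}\,dV_\omega$ forces $u_t$ to vanish on $\Wr$ because this weight is ``too singular transversally to $W$.'' It is not: in the log canonical case the weight $\gamma(\delta\Psi)e^{-\Psi}$ is locally \emph{integrable} near the non-klt locus. In the model case $\Psi=\log\abs{z_1}^2$ one has $\gamma(\delta\Psi)e^{-\Psi}=\abs{z_1}^{-2}\bigl(1+4\delta^2(\log\abs{z_1})^2\bigr)^{-1}$, whose transverse integral $\int_0^{1/2} r^{-1}\bigl(1+4\delta^2(\log r)^2\bigr)^{-1}dr$ converges. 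Indeed it \emph{must} be integrable, for otherwise the conclusion \eqref{output00} --- which asserts precisely that the extension $\st$ has finite norm against this weight --- would force $\st|_{\Wr}=0$ and the theorem would be vacuous. The actual mechanism in Demailly's proof is that the twisted estimate gives the solution $u_t$ a bound against a strictly stronger intermediate weight, roughly $e^{-\Psi}/(-\Psi)$ (coming from the inverse of the twisting factor $\eta+\lambda$), which \emph{is} non-integrable transversally to the non-klt locus; since $u_t$ is moreover holomorphic on the neighborhood $\{\Psi<t\}$ of $W$ (the $\db$-data being supported in the annulus), this non-integrability forces $u_t\equiv 0$ on $\Wr$, while the final, weaker weight $\gamma(\delta\Psi)e^{-\Psi}$ is the one retained in \eqref{output00}. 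Without distinguishing these two weights your proof of $\st|_{\Wr}=s$ collapses. A secondary gap: passing to the limit in the error term requires the existence of the Ohsawa measure in this lc generality (itself nontrivial, \cite[Prop. 4.5]{D15}, cf. Proposition~\ref{exist}) and an approximation argument, since \eqref{dvp} is a statement about compactly supported continuous test functions, not about $\abs{\sigma}^2$ directly.
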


\noindent  Here the Ohsawa measure $dV[\Psi]$ is taken with respect to the smooth volume form $dV_\omega$ associated to $\omega$.  Also $\gamma (x) = e^{- \frac{1}{2} x}$ for $x \ge 0$ and $\gamma(x) = \frac{1}{1+x^2}$ for $x \le 0$, cf. \cite[(2.7)]{D15}.

\begin{remark1} \label{glc}
 In \cite[Thm. 2.8]{D15}, the function $\Psi$ is assumed to have `log canonical singularities along'
 the non-klt locus $Y$, which is in fact the same as the pair $(X, \Psi)$ being log canonical in the standard terminology. 
  
 \end{remark1}

We derive from \cite[Thm. 2.8]{D15} the following  $L^2$ extension theorem formulated for a  maximal lc center with a unique lc place.

\begin{theorem}\label{extension}

Let $(X, \psi)$ be an lc pair where $(X, \omega)$ is a weakly pseudoconvex K\"ahler manifold. Let $K_X + L + B$ and $\Psi, h, b$ be as in the setting of Theorem~\ref{extension0}.  

Let $Y$ be a maximal lc center of $(X, \psi)$ with a unique lc place. Then the same statement of $L^2$ extension as in Theorem~\ref{extension0} holds for a holomorphic section $s \in H^0 (Y, (K_X + L + B)|_Y)$ 
 only replacing the Ohsawa measure $dV[\Psi]$ by $dV[\Psi]_Y$. More precisely, if we have 
  a holomorphic section $s \in H^0 (Y, (K_X + L + B)|_Y)$ satisfying 

\begin{equation}\label{input1}
 \int_{\yr} \abs{s}_{\omega, h, b}^2  dV[\Psi]_Y < \infty ,
\end{equation}

\noi  then there exists a holomorphic section $\wt{s} \in H^0 ( X, K_X + L + B) $ such that we have $\tilde{s}|_Y = s$ and moreover

\begin{equation}\label{output1}
 \int_X   \abs{\wt{s}}_{\omega, h, b}^2  \gamma (\delta \Psi) e^{-\Psi}  dV_{\omega}        \le   \frac{34}{\delta}  \int_{\yr} \abs{s}_{\omega, h, b}^2  dV[\Psi]_Y.  
 \end{equation}
 
\noindent  Here the Ohsawa measure $dV[\Psi]_Y$ is taken with respect to the smooth volume form $dV_\omega$.

\end{theorem}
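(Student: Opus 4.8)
The plan is to reduce Theorem~\ref{extension} to Demailly's extension theorem, Theorem~\ref{extension0}, by promoting the section $s$ given on the single maximal lc center $Y$ to a section on the whole non-klt locus via extension by zero. Write $W$ for the non-klt locus $N(\psi)$, so that $Y$ is one of its irreducible components. By Proposition~\ref{recover}, the regular locus decomposes as a disjoint union $\Wr = \bigcup_{j} W'_j$ of the open strata attached to the various components, each $W'_j$ being open and closed in $\Wr$; let $W'_0$ be the stratum that is the dense open subset of $Y$. I would then define a section $s' \in H^0(\Wr, (K_X + L + B)|_{\Wr})$ by setting $s' = s$ on $W'_0$ and $s' = 0$ on every other stratum. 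Because the $W'_j$ are open and closed in the disjoint union $\Wr$, the section $s'$ is automatically holomorphic there.

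The next step is to verify the input norm condition \eqref{input00} for $s'$ with respect to the full Ohsawa measure. Using the disjointness of the strata together with the identification (Proposition~\ref{recover}) of the restriction of $dV[\Psi]$ to $Y$ with $dV[\Psi]_Y$, and the fact that both measures put no mass on closed analytic subsets, one obtains
\[
\int_{\Wr} \abs{s'}_{\omega, h, b}^2 \, dV[\Psi] = \int_{W'_0} \abs{s}_{\omega, h, b}^2 \, dV[\Psi] = \int_{\yr} \abs{s}_{\omega, h, b}^2 \, dV[\Psi]_Y,
\]
since the integrand vanishes identically on the remaining strata, so their contribution is genuinely $0$ even if some other component carries the infinity measure by Theorem~\ref{infinite}. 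In particular the right-hand side is finite by hypothesis \eqref{input1}, so $s'$ satisfies \eqref{input00}.

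Applying Theorem~\ref{extension0} to $s'$ then produces $\wt{s} \in H^0(X, K_X + L + B)$ with $\wt{s}|_{\Wr} = s'$ and the estimate \eqref{output00}, whose right-hand side equals $\frac{34}{\delta} \int_{\yr} \abs{s}_{\omega,h,b}^2 \, dV[\Psi]_Y$ by the computation above; this is precisely the desired output estimate \eqref{output1}. It then remains to upgrade $\wt{s}|_{\Wr} = s'$ to $\wt{s}|_Y = s$. Since $\wt{s}$ and $s$ are both holomorphic sections over $Y$ agreeing on the dense open subset $W'_0 \subset \yr$ and $Y$ is irreducible (so $\yr$ is connected), the identity theorem forces $\wt{s}|_Y = s$ on $\yr$, hence as sections on $Y$.

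I expect the point requiring the most care to be the measure-theoretic bookkeeping in the second step: one must make precise that restricting the full Ohsawa measure to $Y$ recovers $dV[\Psi]_Y$ (Proposition~\ref{recover}), and that the vanishing of $s'$ on the other strata makes their contribution genuinely zero under the convention that the integral of the zero function is $0$ against any measure, so that a component with non-unique lc places, on which $dV[\Psi]$ is the infinity measure, creates no obstruction. I note that the hypothesis that $Y$ itself has a unique lc place is not logically needed for the reduction to go through, but it is exactly what makes the input condition non-vacuous: by Theorem~\ref{infinite} it guarantees that $dV[\Psi]_Y$ is not the infinity measure, so that nonzero sections $s$ can actually satisfy \eqref{input1}.
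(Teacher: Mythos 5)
Your reduction is the same in spirit as the paper's (extend $s$ by zero, then quote Theorem~\ref{extension0}), but it has a genuine gap at the point where you feed $s'$ into Theorem~\ref{extension0}. The hypothesis ``$s \in H^0(\Wr, (K_X+L+B)|_{\Wr})$'' there cannot be taken at face value as \emph{any} holomorphic section of the abstract complex manifold $\Wr$. Demailly's theorem \cite[Thm. 2.8, Rem. 2.9 (b)]{D15}, which Theorem~\ref{extension0} restates, concerns sections of $(K_X+L+B)\otimes \OO_X/\JJ(\psi)$ on the reduced non-klt locus $W$: near every point of $W$ --- including points of $\Ws$, hence of the intersections $Y \cap Y_j$ with the other components --- the section must be the restriction of a local holomorphic section of the ambient bundle. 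This local extendability is the very first step of Demailly's proof (local holomorphic extensions glued by a partition of unity, then a $\db$-estimate), so it is part of the hypothesis, not a notational convenience. Your $s'$ need not satisfy it: unless $s$ vanishes on $Y \cap Y_j$, the section $s'$ is not even continuous across these intersections, being equal to $s$ on the stratum of $Y$ and to $0$ on the strata of the $Y_j$, and at that stage of your argument no such vanishing has been proved. In fact, if the face-value reading were available, it would already contain the hard implication of Theorem~\ref{main1}: an extension $\wt{s}$ with $\wt{s}|_{\Wr} = s'$ is continuous on $X$, and comparing its boundary values from the two sides of $Y \cap Y_j$ forces $s|_{Y\cap Y_j}=0$. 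So your reading of Theorem~\ref{extension0} silently presupposes the key content of the paper.

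The paper's proof exists precisely to close this gap: from \eqref{input1} (and since $e^{-b}$ is locally bounded below, $b$ being psh), $s$ is locally $L^2$ with respect to $dV[\Psi]_Y$; every irreducible component $Z$ of $Y \cap Y_j$ is an lc center of $(X,\psi)$ properly contained in $Y$ by Proposition~\ref{basic} (2); and then Theorem~\ref{main1} --- in its component form for $Y$ and $dV[\Psi]_Y$, i.e. Corollary~\ref{coro3}, whose proof is exactly where the unique-lc-place hypothesis on $Y$ is used --- gives $s|_Z = 0$. Only after this is the extension by zero a genuine section $s' \in H^0(W, (K_X+L+B)|_W)$, to which Theorem~\ref{extension0} legitimately applies. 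Your closing remark that the unique-lc-place hypothesis is ``not logically needed'' is the symptom of the misreading: it is needed, through Theorem~\ref{main1}, to legitimize the zero extension (when it fails, the statement remains true only vacuously, since $dV[\Psi]_Y$ is then the infinity measure by Theorem~\ref{infinite} and \eqref{input1} forces $s \equiv 0$). Your measure-theoretic bookkeeping (Proposition~\ref{recover}, zero integrand against a possibly infinite measure) and the identity-theorem endgame are fine, but they are not where the difficulty of the theorem lies.
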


 \begin{proof}

	Note that $Y$ is an irreducible component of the non-klt locus $N(\psi) =: W$ of $(X, \psi)$.  Given the section $s$ on $Y$ satisfying \eqref{input1}, we  will  extend it first to the entire $W$ by zero outside $Y$ as follows. Since this zero extension to $W$ is of local nature, we may write $ W:= Y_1 \cup \ldots \cup Y_m$ where $Y_1, \ldots, Y_m$ are the maximal lc centers of $(X, \psi)$ with  $Y = Y_1$. 
	
From \eqref{input1}, we see that the section $s$ (itself) is locally $L^2$ with respect to  $dV[\Psi]_Y$ on $\yr$ since $b$ is psh. 
	We need to show that $s$ is zero on every intersection $Y \cap Y_j$ for $j = 2, \ldots, m$. Fix one such $j$. Note that $Y \cap Y_j$ may not be irreducible or even connected.   By Proposition~\ref{basic}, every irreducible component $Z$ of $Y \cap Y_j$ is an lc center of $(X, \psi)$. Then by Theorem~\ref{main1}, $s$ vanishes along every such $Z$.  Therefore we can extend $s \in H^0 (Y, (K_X + L + B)|_Y)$ to a section $s' \in H^0 (W, (K_X + L + B)|_W)$ to be zero on $W \setminus Y$. 
	
	Then it is clear that $s'$ has finite $L^2$ norm with respect to the full Ohsawa measure ${dV[\Psi]}$ on $W$.  Thus we can now apply Theorem~\ref{extension0} to extend $s'$ to a section $\wt{s}$ on $X$ as in  the conclusion.  
\end{proof}

\begin{remark1}

 The statement of Theorem~\ref{extension} is formulated so that, in particular, it is directly comparable to \cite[Thm. 4.2]{K07}. In particular, it is arranged to  indicate the necessary semipositivity of the line bundle $L$ (in \eqref{curvature})  for the  $L^2$ extension to hold. 
 The semipositivity  required on $B$ is minimal : it can be taken to be $\OO_X$ if $K_X + L$ is a $\ZZ$-line bundle.

\end{remark1}

 We have the following application of Theorem~\ref{extension} which also illustrates use of Theorem~\ref{main1}. 

\begin{prop}[cf. Proposition~\ref{basic} (3)]\label{normality}
Let  $(X, \psi)$ be an lc pair. 
Let $p \in X$ be a point in the non-klt locus of $(X, \psi)$. Let $Y$ be the lc center of $(X, \psi)$ that is minimal  at $p$, i.e. minimal with respect to inclusion among those lc centers passing through $p$. Assume that $Y$ is also a maximal lc center of $(X, \psi)$ with a unique lc place.  Then $Y$ is normal at $p$. 

\end{prop}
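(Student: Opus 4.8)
The plan is to use the classical (Oka) characterization that an irreducible reduced complex space is normal at a point exactly when every weakly holomorphic function there --- a function holomorphic on the regular locus and locally bounded near the point --- is already (strongly) holomorphic. Since $Y$ is an lc center, it is irreducible and reduced, so it suffices to show that every weakly holomorphic function $w$ on a neighborhood of $p$ in $Y$ lies in $\mathcal{O}_{Y,p}$. I will obtain this by realizing $w$ as the restriction to $Y$ of a holomorphic function on $X$ near $p$, produced by $L^2$ extension.

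First I would localize around $p$. Because $Y$ is minimal at $p$, Proposition~\ref{basic} (2) forces $Y$ to be the unique lc center through $p$: any lc center $Z \ni p$ meets $Y$ in a union of lc centers, one of which passes through $p$ and is contained in $Y$, hence equals $Y$ by minimality, so $Y \subseteq Z$; maximality of $Y$ then gives $Y = Z$. Shrinking to a neighborhood $U$ of $p$, the non-klt locus of $(X,\psi)$ equals $Y$ and no lc center is properly contained in $Y$ on $U$. Since $Y$ carries a unique lc place, Corollary~\ref{Coro1.3} (equivalently Corollary~\ref{coro3} applied to the constant section $1$, whose vanishing condition is now vacuous) shows that the Ohsawa measure $dV[\Psi]_Y$ is finite on $U \cap \yr$.

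Given a weakly holomorphic $w$ near $p$, its restriction $w|_{\yr}$ is holomorphic on $\yr$ and locally bounded, so
\[
 \int_{U \cap \yr} \abs{w}^2 \, dV[\Psi]_Y \;\le\; \Big(\sup_{U} \abs{w}^2\Big)\int_{U \cap \yr} dV[\Psi]_Y \;<\; \infty .
\]
Taking $U$ to be a small Stein (hence weakly pseudoconvex K\"ahler) neighborhood on which the curvature hypothesis \eqref{curvature} is arranged by choosing $h$ flat --- so that $\Psi=\psi$ is psh --- and $B=\OO_X$, I would apply Theorem~\ref{extension0}, which extends a section given merely on the regular locus $\Wr=\yr$. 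This yields a holomorphic $\wt{w}$ on $U$ with $\wt{w}|_{\yr}=w|_{\yr}$. Then $\wt{w}|_Y \in \mathcal{O}_{Y,p}$ agrees with $w$ on the dense set $\yr$, so $w=\wt{w}|_Y$ is holomorphic at $p$. As $w$ was arbitrary, $Y$ is normal at $p$.

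The main obstacle is precisely the finiteness of the Ohsawa measure near $p$, which is where the unique-lc-place hypothesis is indispensable: by Theorem~\ref{infinite}, $dV[\Psi]_Y$ would be the infinity measure if $Y$ had two or more lc places, and then no nonzero bounded function would be $L^2$, so the extension step would collapse. A secondary point to handle with care is that the extension must be applied to $w|_{\yr}$, a section known to be holomorphic only on the regular locus; this is legitimate because Theorem~\ref{extension0} extends sections defined on $\Wr$, and locally $\Wr=\yr$. Alternatively, one could first identify the normalization of $Y$ using Corollary~\ref{conf}: the unique lc place $E$ has $f|_E\colon E\to Y$ with connected fibers, so the Stein factorization of $f|_E$ presents the normalization as a finite bijection onto $Y$, after which the same $L^2$-extension argument shows that every local holomorphic function upstairs descends to $Y$.
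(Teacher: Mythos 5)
Your proposal is correct and takes essentially the same route as the paper's own proof: localize at $p$ using minimality of $Y$ (so that $Y$ becomes the entire non-klt locus, a minimal lc center with a unique lc place), deduce local integrability of $dV[\Psi]_Y$ from Corollary~\ref{coro2} (2), note that a weakly holomorphic function is bounded and hence locally $L^2$ against $dV[\Psi]_Y$, extend it holomorphically to a Stein neighborhood by the $L^2$ extension theorem, and conclude normality via the weakly-holomorphic-function criterion. The only cosmetic differences are that the paper localizes by removing a Zariski-closed subset $W$ (which keeps $Y \setminus W$ automatically irreducible) and applies Theorem~\ref{extension} there, whereas you shrink to a small Stein neighborhood and apply Theorem~\ref{extension0} directly with an explicitly arranged flat metric.
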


\begin{proof} 

 The existence and uniqueness of $Y$ was given in Proposition~\ref{basic} (3).  Consider the Ohsawa measure $dV[\Psi]_Y$ (for a choice of $\Psi$ and $dV_X$). 
Since $Y$ is minimal at $p$, there exists a Zariski closed subset $W \subset X$ such that $Y \setminus W$ is a minimal lc center for $(X \setminus W, \psi|_{X \setminus W})$.

Let $Y' := Y \setminus W$ and $X' := X \setminus W$.  Let $\widetilde{\OO}_{Y'}$ be the sheaf of weakly holomorphic functions on $Y'$, i.e. holomorphic functions $f$ on $(Y')_{\reg}$ such that every point of $(Y')_{\sing}$ has a neighborhood $V$ so that $f$ is bounded on $(Y')_{\reg} \cap V$. We need to show that 
 $\widetilde{\OO}_{Y'} = \OO_{Y'}$ by \cite[Chap.II \S 7]{DX}.

 By Corollary~\ref{coro2} (2), the Ohsawa measure $dV[\Psi]_Y$ is locally integrable when restricted to $Y'$. Hence every weakly holomorphic function germ on $Y'$ is locally $L^2$ with respect to $dV[\Psi]_{Y'}$ at every point $q \in Y'$. Applying Theorem~\ref{extension} to a Stein neighborhood $V \subset X'$  of $q$, $f$ can be extended from $V \cap (Y')_{\reg}$ to a holomorphic function $F$ on $V$. The restriction of $F$ to $Y' \cap V$ is holomorphic, hence  we have $\widetilde{\OO}_{Y'} = \OO_{Y'}$. 
\end{proof}

 \medskip

\section{Proofs of the main results}  

Based on preparations from previous sections, we complete the proofs of the main results. 
  First we revisit the following fact  from \cite[Prop. 4.5]{D15}, whose proof we treat here in detail in our setting together wth the necessary assumption of `unique lc place' made explicit.

\begin{proposition}\label{thm4}

 Let $Y$ be a maximal lc center of $(X, \Psi)$ as in the setting of Definition~\ref{iber}. 
  If $Y$ has a unique lc place, then the Ohsawa measure $dV[\Psi]_Y$ has smooth positive density with respect to the Lebesgue measure on a Zariski open set of $\yr$.

\end{proposition}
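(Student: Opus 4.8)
The plan is to reduce the statement to a local computation on the unique lc place $E$ of $Y$ and then to analyze the fiberwise pushforward under $f|_E : E \to Y$. First I would invoke Lemma~\ref{demu}: since $Y$ has a unique lc place, the divisor $T$ appearing in the proof of Proposition~\ref{exist} is the single prime divisor $E$, so that $dV[\Psi]_Y = f_*(d\nu|_E)$. Next I would recall the explicit shape of $d\nu|_E$ from the local Ohsawa-measure computation (cf. \cite[Prop. 3.3]{KS20}, \cite[Prop. 4.5]{D15}): in snc local coordinates $(z_1, z_2', z_3', \ldots)$ with $E = (z_1 = 0)$, the form $e^{-f^*\Psi} f^* dV_X$ carries along each prime divisor $E_i$ the exponent $2\,a_i$ with $a_i = a(E_i, X, \Psi)$, since $f^* dV_X$ vanishes to order $\ord_{E_i}(K_{X'/X})$ and $f^*\Psi$ has generic Lelong number $\ord_{E_i}(\Psi)$. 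The coefficient $a_1 = -1$ along $E$ produces the factor $|z_1|^{-2}$, whose integral over the annuli $\{ t < f^*\Psi < t+1 \}$ yields, in the limit $t \to -\infty$, a genuine measure on $E$ whose density with respect to Lebesgue measure is $C\, e^{-\phi} \prod_j |z_j'|^{2 a_j}$, where $\phi$ is smooth and the $(z_j' = 0)$ are the components of the divisor $F|_E$ from \eqref{pole}. In particular $d\nu|_E$ is smooth and strictly positive away from $\Supp(F|_E)$.

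The crucial geometric point I would isolate next is that the only components of $F|_E$ carrying the non-integrable exponent $a_j = -1$ (i.e.\ $|z_j'|^{-2}$) are the \emph{other} lc places, namely those for lc centers $Z' \neq Y$; every remaining (klt) component has $a_j > -1$, hence $2 a_j > -2$, which is locally integrable. Because $E$ is the unique lc place of $Y$, any such $E'$ satisfies $f(E') \neq Y$, and since $Y$ is maximal one has $f(E' \cap E) \subseteq f(E') \cap Y \subsetneq Y$. Therefore, setting
$$
V := \yr \setminus \Bigl( Y_{\sing} \cup \bigcup_{Z \subsetneq Y} Z \Bigr),
$$
a Zariski open subset (a proper closed set is removed, by the finiteness in Proposition~\ref{basic}), the preimage $f^{-1}(V) \cap E$ is disjoint from all other lc places and so meets only the integrable components of $F|_E$. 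Shrinking $V$ further by generic smoothness (Sard's theorem in characteristic $0$), I may assume that $f|_E : f^{-1}(V) \cap E \to V$ is a proper smooth submersion with compact fibers $F_y$, which are connected by Corollary~\ref{conf}, and that $\Supp(F|_E) \cap f^{-1}(V)$ is a relative snc divisor with all exponents $2 a_j > -2$.

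Over $V$ I would then compute $dV[\Psi]_Y = f_*(d\nu|_E)$ by integration over the fibers, writing $dV[\Psi]_Y = \rho(y)\, dV_Y$ with $\rho(y)$ the fiber integral of $d\nu|_E$. Finiteness of $\rho(y)$ holds because the restriction of the poles to the $(c-1)$-dimensional fiber $F_y$ retains the integrable exponents $2 a_j > -2$, so the integral of $\prod_j |z_j'|^{2 a_j}$ over the snc configuration converges; positivity of $\rho(y)$ is immediate, the integrand being smooth and strictly positive on a dense subset of the connected fiber.

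The hard part will be the smoothness of $\rho$ as a function of $y$: one must justify that integrating the smoothly varying, mildly singular relative density over the fibers of a proper smooth family yields a smooth function on $V$. I would handle this by the standard local model in a transverse coordinate $w$ to a pole component, where the fiber integral takes the form $\int |w|^{2 a_j} g(w, y)\, \tfrac{i}{2}\, dw \wedge d\bar{w}$ with $g$ smooth in all variables and $a_j > -1$; differentiation under the integral sign in the parameters $y$ is legitimate since the weight $|w|^{2 a_j}$ is integrable and the parameter-derivatives of $g$ remain bounded, so $\rho$ is smooth. This yields a smooth positive density for $dV[\Psi]_Y$ on the Zariski open set $V \subset \yr$, as claimed. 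In the divisorial case $\dim Y = \dim E$, where connectedness of the fibers forces $f|_E$ to be birational, no fiber integration is needed and the smoothness is automatic, so the genuine content of this final step lies in the case of positive-dimensional fibers.
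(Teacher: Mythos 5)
Your proposal is correct and follows essentially the same route as the paper's proof: both identify $dV[\Psi]_Y = f_*(d\nu|_E)$ via Lemma~\ref{demu} and the pole structure \eqref{pole} from Proposition~\ref{exist}, observe that uniqueness of the lc place forces every coefficient-$1$ (non-integrable) component of $F|_E$ to be an lc place of some other lc center so that its image in $Y$ lies in a proper Zariski-closed subset (your set $V$ agrees with the paper's $Y_0$ once one invokes Proposition~\ref{basic}(2) to see that $f(E')\cap Y$ is a union of lc centers properly contained in $Y$), and then conclude over the resulting Zariski open set. The only difference is that you spell out the last step --- generic smoothness, fiber integration, and differentiation under the integral sign --- which the paper compresses into the single assertion that the direct image of a locally integrable volume form with snc poles of exponents $> -2$ has smooth positive density; this is added rigor rather than a deviation.
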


 \begin{proof} 
 
   We continue to use the setting and notation of the proof of Proposition~\ref{exist}. In particular, let $f: X' \to X$ be a log resolution of the pair $(X, \Psi)$. 
  Let $Y_0$ be the Zariski open subset of $\yr$ defined by $$Y_0 = \yr \setminus (Y_2 \cup \ldots \cup Y_m \cup Z_1 \cup \ldots \cup Z_k ) $$ where $Y_2, \ldots, Y_m$ are other maximal lc centers than $Y:=Y_1$ (we may assume that there are only a finite number of them by restricting to a neighborhood of $Y$)  and $Z_1, \ldots, Z_k$ are lc centers of $(X, \Psi)$ that are properly contained in $Y$.
  
  Let $E \subset X'$ be the unique lc place.  From the proof of Proposition~\ref{exist}, the Ohsawa measure $dV[\Psi]_Y$ is the direct image along $f = f|_E : E \to Y$ of a measure $d\nu$ on $E$ with poles along the divisor $R$ from \eqref{pole} which simplifies to $R = F |_E$ since $E$ is the unique lc place of $Y$.  The coefficients of the prime divisors $G$ in $R = F|_E$ are at most $1$.  When the coefficient of $G \subset X'$ is equal to $1$, its image in $Y$ is contained in the complement of $Y_0$. Hence when restricted to $Y_0$, the Ohsawa measure is the direct image of a locally integrable volume form on $f^{-1} (Y_0)$, which concludes the proof. 
 \end{proof}

 \begin{proof}[\textbf {Proof of Theorem~\ref{infinite}}]
 
  Let $E_1, \ldots, E_m$ be the lc places with $m \ge 2$ and let $T = E_1 \cup \ldots \cup E_m$.
 Note that for any $i, j$, we have $f(E_i \cap E_j) \subset Y$.  We first claim that 
  there exist (at least) two different lc places, say $E_1$ and $E_2$, such that $f(E_1 \cap E_2) = Y$.

 Suppose otherwise. Then we have a finite number of Zariski closed proper subsets of $Y$ given by $f(E_i \cap E_j)$ for every $1 \le i,j \le m$. Hence  there exists $p \in Y$ such that $p$ does not belong to any of $f(E_i \cap E_j)$.

 On the other hand, since  the restriction of $f$ to $T$,  $f|_T : T \to Y$ has connected fibers by Proposition~\ref{basic0}, the set $$f^{-1} (p) \cap T = (f^{-1} (p) \cap E_1) \cup \ldots \cup (f^{-1} (p) \cap E_m)$$ is connected. Hence there exist $i \neq j$ such that  $(f^{-1} (p) \cap E_i)) \cap (f^{-1} (p) \cap E_j) \neq \emptyset$. Then we have $p \in f(E_i \cap E_j)$, which is contradiction. The above claim is proved.

  The remaining part of the argument was first pointed out by Chen-Yu Chi~\cite{C20} (possibly in different terms).    
   From Lemma~\ref{demu},  the Ohsawa measure is equal to the sum of the measures $f_* (d\nu)|_{E_j}$ where $(d\nu)|_{E_j}$ refers to the restriction to the open set $E_j'$ in \eqref{ejp} in the proof of Proposition~\ref{exist}.  When $j=1$, $(d\nu)|_{E_j}$ is equal to the infinity measure, hence so are the direct image $f_* (d\nu)|_{E_j}$ and  the Ohsawa measure. 
 \end{proof}

 \begin{remark1}
 
 The dichotomy between Theorem~\ref{infinite} and Proposition~\ref{thm4} reconfirms Proposition~\ref{elc} since the definition of the Ohsawa measure is independent of log resolutions.

 \end{remark1}

\begin{remark1}\label{MD}

 The phenomenon of the infinity measure in Theorem~~\ref{infinite} does not occur in the classical case of \cite{M93}, \cite{D00}: see \cite[(2.6)]{D15}. 

\end{remark1}

 We now give the proof of Theorem~\ref{main1}. 

\begin{proof}[\textbf{Proof of Theorem~\ref{main1}}]

Let $p \in Y$ be a point. For the purpose of examining the property of locally $L^2$ at $p$,  we may only consider the   finite number of irreducible components $Y_1, \ldots, Y_m$ each containing $p$, of the non-klt locus of $(X, \Psi)$.  Depending on $p$, we have one of the following three cases.

\begin{itemize}

\item $m=1$ and $Y_1$ has non-unique lc places.

\item $m=1$ and  $Y_1$ has a unique lc place. 

\item $m \ge 2$, i.e. $p \in Y_1 \cup \ldots \cup Y_m$ and some of $Y_1, \ldots, Y_m$ have non-unique lc places. 

\end{itemize}

   In the first case, $s$ is locally $L^2$ at $p$ if and only if $s$ is identically zero on $Y_j$ due to  Theorem~\ref{infinite}. In the third case, we may restrict our attention to each component $Y_j$ by considering $s|_{Y_j}$ with respect to $dV[\Psi]_{Y_j}$. 
Therefore, in order to prove  the theorem, it suffices to assume that the non-klt locus is irreducible and equal to $Y=Y_1$ with a unique lc place.

	From the proof of Proposition~\ref{exist}, the Ohsawa measure $dV[\Psi] = dV[\Psi]_Y$ is the direct image along $f = f|_E : E \to Y$ of a measure $d\nu$ on $E$ with poles along the divisor $R = F |_E$ from \eqref{pole} whose coefficients are at most $1$.  By the definition of the direct image of a measure, for every open subset $U \subset Y$, we have (where $f^{-1} (U) \subset E$) 
	
	\begin{equation}\label{s1}
	\displaystyle \int_U \abs{s}^2 dV[\Psi] = \int_{f^{-1} (U)} \abs{f^*s}^2  d\nu .
	\end{equation}

	First, suppose the condition (2). If a prime divisor $Q$ in the above $F|_E$ appears with coefficient $1$, it must be coming from a properly contained lc center $C_Q$ in $Y$ (since $Y$ itself has a unique lc place). Since $s$ vanishes along $C_Q$, $f^* s$ vanishes along $Q$. This means that the RHS of \eqref{s1} is locally finite, hence gives the condition (1) of the assertion. 
	
 Now suppose the condition (1) that $s$ is locally $L^2$ at every point of $Y$ with respect to $dV[\Psi]$.  
  Let $C$ be a non-maximal lc center contained in $Y$.  We may assume that $C$ is not contained in another such lc center $C' \subset Y$ for the purpose of showing that $s$ vanishes along $C$.  Let $E^1_C, \ldots, E^m_C$ be the lc places of $C$ in the given log resolution.

\begin{lemma}\label{one}
There exists at least one $i$ such that $f(E^i_C \cap E) = C$. 

\end{lemma}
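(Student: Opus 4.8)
The plan is to localize at a general point of $C$ and feed the connectedness of the fibers of $f$ over $Y$ (Proposition~\ref{basic0}) into an elementary separation argument.

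First I would keep the given log resolution $f: X' \to X$ and choose a general point $p \in C$, meaning that $p$ lies in the regular locus of $C$ and avoids the finitely many (Proposition~\ref{basic} (1)) lc centers properly contained in $C$. For such $p$, Proposition~\ref{basic} (3) gives that the minimal lc center $C_p$ equals $C$, so every lc center through $p$ contains $C$; since $C$ is maximal among the lc centers properly contained in $Y$, the lc centers that both pass through $p$ and lie in $Y$ are precisely $C$ and $Y$. Writing $T$ for the union of all lc places whose image is contained in $Y$, and recalling that $E$ is the \emph{unique} lc place of $Y$, the only components of $T$ meeting the fiber $f^{-1}(p)$ are therefore $E$ and the lc places $E^1_C, \ldots, E^m_C$ of $C$ (note $E \ne E^i_C$, as their images $Y$ and $C$ differ). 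Any lc center $D \supseteq C$ with $D \not\subseteq Y$ is harmless here, precisely because its lc places are excluded from $T$.

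Next I would apply Proposition~\ref{basic0} with $W = Y$ (exactly as in the proof of Proposition~\ref{basic} (2)), so that $f|_T : T \to Y$ has connected fibers and hence $S := f^{-1}(p) \cap T$ is connected. By the previous paragraph,
\[ S \;=\; A \cup \bigcup_{i=1}^{m} B_i, \qquad A := f^{-1}(p) \cap E, \quad B_i := f^{-1}(p) \cap E^i_C, \]
with $A \neq \emptyset$ since $f(E) = Y \ni p$ and each $B_i \neq \emptyset$ since $f(E^i_C) = C \ni p$. Now $A$ and $\bigcup_i B_i$ are closed analytic subsets of $S$ covering $S$; if they were disjoint they would separate the connected set $S$ into two nonempty closed pieces, which is impossible. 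Thus $A \cap B_i \neq \emptyset$ for some $i$, i.e. $E \cap E^i_C$ meets $f^{-1}(p)$.

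Finally I would promote this to the domination statement. The conclusion of the previous step holds for every general $p \in C$, and there are only finitely many indices $i$; each $f(E \cap E^i_C)$ is a closed subvariety of $C$ (the image of a closed set under the proper map $f$, with $E^i_C$ mapping into $C$), and their union contains the dense set of general points of $C$, hence equals $C$. As $C$ is irreducible, some single $i$ satisfies $f(E \cap E^i_C) = C$, which is the lemma. I expect the delicate step to be the fiber identification of the first paragraph: it is where the maximality of $C$ among proper lc centers of $Y$ and the uniqueness of the lc place $E$ are genuinely used, whereas the separation and pigeonhole steps are formal.
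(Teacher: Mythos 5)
Your proof is correct and takes essentially the same route as the paper's: both apply Proposition~\ref{basic0} with $W = Y$ to the fiber over a general point $p \in C$, identify the components of $T$ meeting that fiber as $E$ and the $E^i_C$ (via the maximality of $C$ among lc centers properly contained in $Y$ and the uniqueness of the lc place $E$), and conclude by a separation argument on the connected set $f^{-1}(p) \cap T$. The only difference is logical packaging: the paper argues by contradiction, choosing $p$ to also avoid the sets $f(E^i_C \cap E)$ (assumed to be proper subsets of $C$), whereas you argue directly and finish with the irreducibility/pigeonhole step, which amounts to the same thing.
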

 
 \begin{proof}[Proof of Lemma~\ref{one}]
 
    Suppose otherwise. Consider all the closed subsets given by $f(E^i_C \cap E)$ for $1 \le i \le m$ and given by lc centers contained in $C$. Since these are all properly contained, we may choose $p \in C$ in the complement. We will apply Proposition~\ref{basic0} (2) to the set of lc centers $\{ Y, C \}$ where $W = Y \cup C = Y$. Consider $f: T \to Y$ : note that $T$ may contain prime divisors whose images are properly contained lc centers in $C$. By restricting to a neighborhood of $p$, we may assume that $T$ consists of prime divisors whose images are either  $C$ or $Y$ : these are precisely $E^1_C, \ldots, E^m_C$ and $E$. 
  By Proposition~\ref{basic0} (2), we see that $f^{-1} (p) \cap (E^1_C \cup  \ldots \cup E^m_C \cup E)$ is connected. 
 Hence at least for one $i$, we have $f^{-1} (p) \cap E^i_C $ has nonempty intersection with $f^{-1} (p) \cap E$.  Then we have $p \in f(E^i_C \cap E)$, which is contradiction.  
 \end{proof}
 
 From \eqref{s1}, $f^* s$ vanishes along $E^k_C |_E$ for every lc place $E^k_C$ of $C$.  By the above lemma,  there exists $i$ such that $f(E^i_C \cap E) = f(E_C) = C$. Hence $s$ vanishes along $C$. This completes the proof of Theorem~\ref{main1}. 
\end{proof}

 We have the following consequences from Theorem~\ref{main1}. 

\begin{corollary}\label{coro2}

Let $(X, \Psi)$ and $Y$ be as in Theorem~\ref{main1}. 

(1) 
If a holomorphic function $s$ on $Y$ is locally $L^2$ with respect to ${dV[\Psi]}$, then $s$ vanishes along all the intersections of (at least two) irreducible components of the non-klt locus of $(X, \Psi)$. 

(2)   The Ohsawa measure ${dV[\Psi]}$ is locally integrable if and only  if each connected component $Z$ of $Y$ is irreducible and $Z$ is a   minimal  log canonical center of $(X, \Psi)$ with a unique log canonical place. 

\end{corollary}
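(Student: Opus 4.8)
The plan is to deduce both parts directly from Theorem~\ref{main1}, the equivalence of conditions (1) and (2) there, combined with the structural facts about lc centers recorded in Proposition~\ref{basic}. The key device is to test the theorem against a carefully chosen holomorphic function: for part (1) the given $s$, and for part (2) the constant function.

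For part (1), I would fix two distinct irreducible components $Y_i$ and $Y_j$ of the non-klt locus $Y$; these are exactly two distinct maximal lc centers. By Proposition~\ref{basic}~(2), every irreducible component $Z$ of the intersection $Y_i \cap Y_j$ is an lc center, and since $Z \subseteq Y_i \cap Y_j \subsetneq Y_i$ (the inclusion being proper because $Y_i$ and $Y_j$ are distinct maximal centers), such $Z$ is non-maximal. The hypothesis that $s$ is locally $L^2$ with respect to $dV[\Psi]$ is precisely condition (1) of Theorem~\ref{main1}, which yields condition (2); in particular $s$ vanishes along all non-maximal lc centers, hence along each such $Z$, and therefore along the whole intersection $Y_i \cap Y_j$. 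This settles (1) with no further work.

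For part (2), the main observation is that the Ohsawa measure $dV[\Psi]$ is locally integrable exactly when the constant function $s \equiv 1$ on $Y$ is locally $L^2$ at every point of $Y$, since $\abs{1}^2 = 1$ and local integrability means finite mass on small neighborhoods (intersected with $\yr$, where the measure lives). I would therefore apply Theorem~\ref{main1} with $s \equiv 1$. As the constant $1$ vanishes nowhere, condition (2) of Theorem~\ref{main1} can hold only vacuously, so it is equivalent to the statement that $(X,\Psi)$ possesses no non-maximal lc center and that every maximal lc center has a unique lc place. It then remains to translate this into the stated condition on connected components. For the forward direction, assume local integrability. Every maximal lc center then has a unique lc place. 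If some connected component $Z$ of $Y$ were reducible, then, being connected and a finite union of its irreducible components (closed maximal lc centers), at least two of these components would meet; by Proposition~\ref{basic}~(2) their intersection would contain a non-maximal lc center, a contradiction. Hence each connected component is irreducible and equals a single maximal lc center, and since there are no non-maximal lc centers it contains no proper lc center and is therefore minimal. Conversely, if each connected component $Z$ is irreducible, minimal, and carries a unique lc place, then distinct components do not meet and no proper lc center sits inside a minimal one, so there is no non-maximal lc center; combined with the uniqueness of lc places this makes condition (2) of Theorem~\ref{main1} hold vacuously for $s \equiv 1$, giving local integrability.

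I expect the only genuine subtleties to be bookkeeping ones rather than conceptual obstacles: first, stating cleanly that local integrability of the measure is the same as local $L^2$-finiteness of the constant function, given that $dV[\Psi]$ is supported on $\yr$ while the vanishing and connectivity statements concern all of $Y$; and second, the elementary topological step that a connected but reducible component must have two irreducible components with nonempty intersection. Both are straightforward but should be spelled out to keep the reduction to Theorem~\ref{main1} rigorous.
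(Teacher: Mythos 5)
Your proposal is correct and follows essentially the same route as the paper: part (1) is obtained by combining Theorem~\ref{main1} with Proposition~\ref{basic}~(2) applied to intersections of maximal lc centers, and part (2) by testing Theorem~\ref{main1} against the constant function $s \equiv 1$, which forces the absence of non-maximal lc centers and of non-unique lc places. The only difference is cosmetic: the paper first reduces to the case of connected $Y$ and leaves the topological step (a connected, reducible component has two irreducible components that meet) implicit, whereas you spell it out.
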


\begin{proof}

(1) The irreducible components are maximal lc centers. Since the intersection of a set of lc centers is written as a union of lc centers (Proposition~\ref{basic}), the assertion follows from Theorem~\ref{main1}. 

(2) Clearly one may assume that $Y$ is connected (so $Z= Y$). By definition, $dV[\Psi]$ is locally integrable if and only if  the constant function $s = 1$ is locally integrable with respect to $dV[\Psi]$. 

If this holds, due to the condition (2) of Theorem~\ref{main1},  $Y$ cannot be reducible and $Y$ cannot contain any non-maximal lc center. This gives one direction of the statement and the converse is also clear for the same reason. 
\end{proof}

\begin{proof}[\textbf{Proof of Corollary~\ref{coro3}}]

Suppose that $s$ satisfies (1). Then as in the proof of Theorem~\ref{extension}, $s$ can be extended to the entire non-klt locus by setting it as zero on all the other irreducible components than $Y$. Then applying Theorem~\ref{main1}, we see that $s$ should vanish along all non-maximal lc centers contained in $Y$. 

Now suppose that $s$ satisfies (2). If any other lc center $Z$ intersects with $Y$, then $Z \cap Y$ is a union of lc centers of the same pair by Proposition~\ref{basic} (2). Hence $s$ vanishes along $Z \cap Y$ and this means that we can extend $s$ to the entire non-klt locus by setting it as zero outside $Y$. By Theorem~\ref{main1}, $s$ is locally $L^2$ with respect to $dV[\Psi]$, hence also with respect to $dV[\Psi]_Y$ on $Y$. 
\end{proof}

\footnotesize

\qa

\qa

\normalsize

\noi \textsc{Dano Kim}

\noi Department of Mathematical Sciences and Research Institute of Mathematics

\noi Seoul National University, 08826  Seoul, Korea

\noi Email address: kimdano@snu.ac.kr


\begin{thebibliography}{widest-}



\bibitem[A03]{A03} F. Ambro, \emph{Quasi-log varieties}, Proc. Steklov Inst. Math. 2003, no. 1(240), 214–233. 

\bibitem[A11]{A11} F. Ambro, \emph{Basic properties of log canonical centers}, Classification of algebraic varieties, 39–48, EMS Ser. Congr. Rep., Eur. Math. Soc., Zürich, 2011. 


\bibitem[BBEGZ]{BBEGZ} R. Berman, S. Boucksom, P. Eyssidieux, V. Guedj and A. Zeriahi, \emph{Kähler-Einstein metrics and the Kähler-Ricci flow on log Fano varieties}, J. Reine Angew. Math. 751 (2019), 27–89.


\bibitem[B13]{B13} Z. Błocki, \emph{ Suita conjecture and the Ohsawa-Takegoshi extension theorem}, Invent. Math. 193 (2013), no. 1, 149–158.

\bibitem[B20]{B20} S. Boucksom, \emph{Singularities of plurisubharmonic functions and multiplier ideals}, Jan. 2020,  Lecture notes available on the author's website. 


\bibitem[Ca17]{Ca17} J. Cao, \emph{Ohsawa-Takegoshi extension theorem for compact Kähler manifolds and applications}, Complex and symplectic geometry, 19–38, Springer INdAM Ser., 21, Springer, Cham, 2017. 

\bibitem[CDM17]{CDM17} J. Cao, J.-P. Demailly and S. Matsumura, \emph{A general extension theorem for cohomology classes on non reduced analytic subspaces}, Sci. China Math. 60 (2017), no. 6, 949–962.

\bibitem[Ch21]{C21} C.-Y. Chi, \emph{On the extension of holomorphic adjoint sections from reduced unions of strata of divisors}, Math. Z. 299 (2021), no. 3-4, 1789–1819.

\bibitem[Ch20]{C20} C.-Y. Chi, \emph{Private communications}, 2020. 


\bibitem[D00]{D00} J.-P. Demailly, \emph{On the Ohsawa-Takegoshi-Manivel $L^2$ extension theorem}, Complex analysis and geometry (Paris, 1997), 47–82, Progr. Math., 188, Birkhäuser, Basel, 2000. 


\bibitem[D11]{D11} J.-P. Demailly, \emph{Analytic methods in algebraic geometry}, Surveys of Modern Mathematics, 1. International Press, Somerville, MA; Higher Education Press, Beijing, 2012. 


\bibitem[D15]{D15} J.-P. Demailly, \textit{Extension of holomorphic functions defined on non reduced analytic subvarieties}, The legacy of Bernhard Riemann after one hundred and fifty years. Vol. I, 191 - 222, Adv. Lect. Math., 35.1, Int. Press, Somerville, MA, 2016. 




\bibitem [DX]{DX} J.-P. Demailly, \emph{Complex analytic and differential geometry}, self-published e-book, Institut Fourier,
455 pp, last version: June 21, 2012.


\bibitem[De21]{De21} Y. Deng, \emph{Applications of the Ohsawa-Takegoshi extension theorem to direct image problems}, Int. Math. Res. Not. IMRN 2021, no. 23, 17611–17633.


\bibitem[F11]{F11} O. Fujino, \emph{Non-vanishing theorem for log canonical pairs}, J. Algebraic Geom. 20 (2011), no. 4, 771–783.


\bibitem[F22]{F22} O. Fujino, \emph{Minimal model program for projective morphisms between complex analytic spaces}, arXiv:2201.11315. 


\bibitem[F23]{F23} O. Fujino, \emph{Cone and contraction theorem for projective morphisms between complex analytic spaces}, arXiv:2209.06382. 


\bibitem[GZ14]{GZ14}  Q. Guan and X. Zhou, \emph{A solution of an $L^2$ extension problem with an optimal estimate and applications}, Ann. of Math. (2) 181 (2015), no. 3, 1139-1208. 

\bibitem[GZZ12]{GZZ12} Q. Guan, X. Zhou and L. Zhu, \emph{On the Ohsawa-Takegoshi $L^2$ extension theorem and the Bochner-Kodaira identity with non-smooth twist factor}, J. Math. Pures Appl. (9) 97 (2012), no. 6, 579–601. 






\bibitem[Ka97]{Ka97} Y. Kawamata, \emph{On Fujita's freeness conjecture for $3$-folds and $4$-folds}, Math. Ann. 308 (1997), no. 3, 491--505.



\bibitem[Ka14]{Ka14} Y. Kawamata, \emph{Higher Dimensional Algebraic Varieties} (in Japanese), Iwanami Studies in Advanced Mathematics, 2014. 

\bibitem[K07]{K07} D. Kim, \emph{$L^2$ extension of adjoint line bundle sections}, Ann. Inst. Fourier (Grenoble) 60 (2010), no. 4, 1435 - 1477. 

\bibitem[K20]{K20} D. Kim, \emph{A remark on $L^2$ extension theorems}, Bull. Korean Math. Soc. 58 (2021), no. 5, 1235–1245.

\bibitem[KS20]{KS20} D. Kim and H. Seo, \emph{On $L^2$ extension from singular hypersurfaces},  Math. Z. 303 (2023), no. 4, 89.



\bibitem[Ko97]{Ko97} J. Koll\'ar, \emph{Singularities of pairs}, Algebraic geometry - Santa Cruz 1995, 221-287, Proc. Sympos. Pure Math., 62, Part 1, Amer. Math. Soc., Providence, RI, 1997.



\bibitem[Ko07]{Ko07} J. Koll\'ar, \emph{Kodaira's canonical bundle formula and subadjunction},  Flips for 3-folds and 4-folds, 134-162, Oxford Lecture Ser. Math. Appl., 35, Oxford Univ. Press, Oxford, 2007.



\bibitem[Ko13]{Ko13} J. Koll\'ar, \emph{Singularities of the minimal model program}, Cambridge Tracts in Mathematics, 200. Cambridge University Press, Cambridge, 2013. 



\bibitem[Ko19]{Ko19} J. Koll\'ar, \emph{Private communications}, 2019. 


\bibitem[KM]{KM} J. Koll\'ar and S. Mori, \emph{Birational geometry of algebraic varieties}, Cambridge Tracts in Mathematics, 134. Cambridge University Press, Cambridge, 1998. 




\bibitem[M93]{M93} L. Manivel, \emph{Un th\'eor\`eme de prolongement $L^2$ de sections holomorphes d'un fibr\'e hermitien}, Math. Z. 212 (1993), no. 1, 107–122. 




\bibitem[MV]{MV} J. McNeal and D. Varolin, \emph{Analytic inversion of adjunction: $L^2$ extension theorems with gain}, Ann. Inst. Fourier (Grenoble) 57 (2007), no. 3, 703–718.



\bibitem[O2]{O2} T. Ohsawa, \emph{On the extension of $L^2$ holomorphic functions. II}, Publ. Res. Inst. Math. Sci. 24 (1988), no. 2, 265–275.

\bibitem[O4]{O4} T. Ohsawa, \emph{On the extension of $L^2$ holomorphic functions. IV. A new density concept}, Geometry and analysis on complex manifolds, 157–170, World Sci. Publ., River Edge, NJ, 1994.

\bibitem[O5]{O5} T. Ohsawa, \emph{On the extension of $L\sp 2$ holomorphic functions. V. Effects of generalization}, Nagoya Math. J.  161  (2001), 1-21.    Erratum: 163 (2001), 229. 


\bibitem[O20]{O20} T. Ohsawa, \emph{A survey on the $L^2$ extension theorems}, J. Geom. Anal. 30 (2020), no. 2, 1366–1395.

\bibitem[OT87]{OT87} T. Ohsawa and K. Takegoshi, \emph{On the extension of $L\sp 2$ holomorphic functions}, Math. Z. 195 (1987), no. 2, 197-204. 



\bibitem[S02]{S02} Y.-T. Siu, \emph{Extension of twisted pluricanonical sections with plurisubharmonic weight and invariance of semipositively twisted plurigenera for manifolds not necessarily of general type}, Complex geometry (G\"ottingen, 2000), 223-277, Springer, Berlin, 2002.






\end{thebibliography}
\end{document}